\documentclass[a4paper,11pt,reqno]{amsart}
\usepackage[alphabetic,msc-links]{amsrefs} 

\usepackage[utf8]{inputenc} 
\usepackage[T1]{fontenc} 


\usepackage{amsmath} 
\usepackage{amsthm} 
\usepackage{amssymb} 
\usepackage{amscd} 

\usepackage{emptypage}
\usepackage{enumitem} 
\usepackage{multicol}
\usepackage{mathtools} 
\usepackage{mathrsfs} 
\usepackage{hyperref} 
\hypersetup{
	linktocpage = true,
	colorlinks=true,
	linkcolor=red,
}
\usepackage{esint} 

\usepackage{graphicx} 
\usepackage{subcaption}
\usepackage[margin=2cm]{geometry}

\usepackage{pgf,tikz,pgfplots}
\usetikzlibrary{arrows}
\usetikzlibrary{intersections}
\usetikzlibrary{fadings}
\usetikzlibrary{shapes}

\usepackage{cleveref,autonum}

\usepackage[colorinlistoftodos]{todonotes} 
\setlength{\marginparwidth}{2cm} 


\newtheorem{theorem}{Theorem}[section]
\newtheorem*{theorem*}{Theorem}
\newtheorem{proposition}[theorem]{Proposition}
\newtheorem{lemma}[theorem]{Lemma}

\theoremstyle{definition}

\theoremstyle{remark}
\newtheorem{remark}[theorem]{Remark}
\newtheorem*{remark*}{Remark}

\numberwithin{equation}{section}


\newcommand{\con}[1]{\mathbb{#1}}
\newcommand{\R}{\con{R}} 

\usepackage{euscript}
\usepackage{relsize}
\DeclareMathAlphabet{\mathpzc}{OT1}{pzc}{m}{it}
\DeclareMathAlphabet\euscr{T1}{qzc}{m}{n}

\makeatletter
\newcommand{\leqnomode}{\tagsleft@true\let\veqno\@@leqno}
\newcommand{\reqnomode}{\tagsleft@false\let\veqno\@@eqno}
\makeatother


\renewcommand{\d}{\,\mathrm{d}} 

\newcommand\beqc[1]{\left\{\begin{array}{#1}}
	\newcommand\eeqc{\end{array} \right.}

\def\bmatrix{\begin{pmatrix}}
	\def\ematrix{\end{pmatrix}}
\DeclareMathOperator{\Tr}{Tr}

\DeclareMathOperator{\dist}{dist}

\let\div\relax
\DeclareMathOperator{\div}{div}

\renewcommand{\le}{\leqslant}
\renewcommand{\leq}{\leqslant}

\renewcommand{\geq}{\geqslant}

\hyphenation{auto-ma-ti-cally}

\numberwithin{equation}{section}

\def\dfb{\mathfrak{d}}

\setcounter{tocdepth}{1}


\title[Stable cones in the Alt-Phillips free boundary problem]{Stable cones in the Alt-Phillips free boundary problem}

\author{Aram Karakhanyan}
\address{Aram Karakhanyan:
	School of Mathematics, The University of Edinburgh,
	Peter Tait Guthrie Road, EH9 3FD Edinburgh, UK}
\email{aram6k@gmail.com}

\author{Tomás Sanz-Perela}
\address{T. Sanz-Perela:
	Departament de Matemàtiques i Informàtica, Universitat  de Barcelona, Gran Via de les Corts Catalanes 585, 
	08007 Barcelona, Spain}
\email{tomas.sanz.perela@ub.edu}


\begin{document}
	
\begin{abstract}
In this paper we prove a classification result for axially symmetric one phase minimizers of the Alt-Phillips free boundary problem in dimensions 3, 4, and 5.
To accomplish this, we establish a stability inequality that extends the one for the Alt-Caffarelli problem.
\end{abstract}
	
\maketitle

\tableofcontents

\section{Introduction}
For a domain $\Omega\subset\R^n$ with locally Lipschitz boundary, the Alt-Phillips functional is defined as 
\begin{equation}
	\label{Eq:AltPhillipsFunctional}
	E_\gamma^\mathrm{AP} [u] := \int_{\Omega} \big( |\nabla u|^2 + u^\gamma \chi_{\{u>0\}} \big) \d x,
\end{equation}
where $\gamma\in(0, 2)$ and $\chi_E$ denotes the characteristic function of a set $E\subset \R^n$. 
We consider the minimizers of $E_\gamma^\mathrm{AP} [\cdot]$ over the linear space 
$\mathbb K_{u_0}:=\{v\in H^1(\Omega): v-u_0\in H^1_0(\Omega)\}$, where $0\le u_0\in H^1(\Omega)$ ---which forces any minimizer to be nonnegative. 
Our objective in this paper is to classify global minimizers (or, more generally, stable critical points) that are axially symmetric.
Through the paper, we will say that $u$ is a \textit{global minimizer} if it is a minimizer over $\mathbb K_{u}$ for any bounded domain $\Omega\subset \R^n$, and we will say that it is \textit{stable} if the second variation of $E_\gamma^\mathrm{AP}[\cdot]$ at $u$ is nonnegative (see~\Cref{Prop:ExpansionAndStabilityGeneral} below).

The problem above appears in population dynamics \cite{GurtinMacCamy}, where the density  $\rho$
of the population is governed by the porous medium equation
\begin{equation}
	\rho_t=\Delta \phi(\rho)+\sigma(\rho),
\end{equation}
where $\sigma(\rho)$ represents the population supply due to births and deaths, and $\phi$ is a nonlinear function of $\rho$. 
For the steady-case problem with linear supply function modeling a death dominant rate (otherwise the whole domain becomes populated, i.e., $\rho>0$ everywhere), when $\phi$ is a power function  solutions can be obtained (after a change of variable) as minimizers of $E_\gamma^\mathrm{AP} [\cdot]$.

As usual in this type of free boundary problems, in order to understand the regularity of the free boundary $\partial\{u>0\}\cap \Omega$, after a blow-up one is led to study global homogeneous minimizers of $E_\gamma^\mathrm{AP} [\cdot]$. 
In our case, there are two regimes: for $\gamma\in [1, 2)$ the problem is akin to the obstacle problem ($\gamma = 1$), namely the blow-up limits are convex functions (see \cite{AltPhillips,Bonorino}), whereas for $\gamma\in(0, 1)$ it shares the features of the classical Alt-Caffarelli (or Bernoulli) problem, corresponding to $\gamma=0$ (see \cite[Lemma~4.3]{AltPhillips} and \cite[Remark~ 3.4]{PhillipsCPDE}). 
Even for the case $\gamma=0$ the classification of global minimizers in dimensions $n=5, 6$ is still open (see \cite{CaffarelliJerisonKenig, JerisonSavin, DeSilvaJerison}). 
For the general functional $E_\gamma^\mathrm{AP} [\cdot]$ the classification of global solutions is only known in dimension $n=2$ (see \cite{AltPhillips}) and thus for $\gamma\in(0, 1)$ and $n\geq 3$ it is an outstanding open problem.
In this paper, we settle it in dimensions $n=3,4,5$ in some range of $\gamma$ for stable axially symmetric solutions, which are the natural counterexamples in the context of this problem (see \cite{DeSilvaJerison}).

The key tool to classify global minimizers for $\gamma=0$ is the stability inequality 
\begin{equation}
	\label{Eq:StabilityOnePhase-Intro}
\int_{\partial \{u>0\}} H\varphi^2 \d \sigma \leq \int_{\R^n}|\nabla\varphi|^2 \d x \quad \text{ for all } \varphi \in C^1_c(\R^n),
\end{equation}
where $H$ is the mean curvature\footnote{\label{Footnote:MeanCurvature}Along this paper, the mean curvature of the free boundary is the mean curvature of the boundary of $\{u=0\}$  (assuming $\{u=0\}$ with nonempty interior), oriented in the direction of $\nabla u/|\nabla u|$.
} 
of the free boundary $\partial \{u>0\}$ (assumed to be smooth in the support of the test function $\varphi$); this inequality is a characterization of the nonnegativeness of the second variation of $E_0^\mathrm{AP} [\cdot]$ at $u$.
For $n=3$, \eqref{Eq:StabilityOnePhase-Intro} implies that the free boundary of any global homogeneous minimimizer consists of a single convex cone, and thus since homogeneous harmonic functions in cones have homogeneity which decreases as the domain increases, this yields that the free boundary is a hyperplane; see~\cite{CaffarelliJerisonKenig}.
The stability inequality \eqref{Eq:StabilityOnePhase-Intro} has also been used in \cite{FernandezRealRosOton2019global} to classify axially symmetric solutions (up to dimension $n=5$), and in~\cite{DeSilvaJerison} (indirectly, with a non-variational approach) to classify global homogeneous minimizers in dimensions $n=3,4$.

Our main technical result is contained in  \Cref{Th:StabilityCondition}, 
where 
we derive a stability inequality for the Alt-Phillips functional \eqref{Eq:AltPhillipsFunctional}. 
More precisely, we show that if $u$ is a nonnegative stable critical point of $E_\gamma^\mathrm{AP} [\cdot]$ in $\Omega\subset \R^n$, then
\begin{equation}
	\label{Eq:StabilityAP1-intro}
	\dfrac{2-\gamma}{2} \dfrac{\gamma}{2}  
	\int_{\{u>0\} \cap \Omega} u^{\gamma} \dfrac{u^\gamma - |\nabla u|^2}{u^2}\varphi^2  \d x 
	\leq \int_{\{u>0\} \cap \Omega} u^{\gamma} |\nabla \varphi |^2  \d x  \quad \text{ for all } \varphi \in C^1_c(\Omega).
\end{equation}	
Let us mention that from here one recovers the stability inequality for $\gamma=0$ in the limit (see \Cref{Sec:AlphaToZero}), and thus 
\eqref{Eq:StabilityAP1-intro} is more general than
\eqref{Eq:StabilityOnePhase-Intro}. 
Moreover, it cannot be derived, even formally, from the stability condition for semilinear equations of the form $-\Delta u=f(u)$; see \Cref{Remark:ExteriorVariation}.

Our main classification result hinges on the stability 
inequality \eqref{Eq:StabilityAP1-intro} derived in \Cref{Th:StabilityCondition} and it is about  
classification of global stable solutions that are axially symmetric.
These are solutions with rotational symmetry with respect to some axis (see the precise definition in \Cref{Sec:AxSymSol}).

\begin{theorem}
	\label{Th:AxiallySymm}
	Let $n \geq 3$ and $\gamma \in (0,2/3)$, and let $u\geq 0$ be a global stable critical point of $E_\gamma^\mathrm{AP} [\cdot]$ that is axially symmetric.
	Assume that the dimension satisfies
	\begin{equation}
		\label{Eq:DimensionConstrain}
		2 + (1 - \sqrt{1 - \alpha})^2 < n < 2 + (1 + \sqrt{1 - \alpha})^2, \quad \text{where } \alpha=\frac{2\gamma}{2-\gamma}.
	\end{equation} 
	Then, $u$ is one-dimensional.
\end{theorem}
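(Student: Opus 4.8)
\medskip

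The plan is to follow the strategy pioneered for the Alt--Caffarelli problem (and used in~\cite{FernandezRealRosOton2019global}), but with the stability inequality~\eqref{Eq:StabilityAP1-intro} from \Cref{Th:StabilityCondition} replacing~\eqref{Eq:StabilityOnePhase-Intro}. First I would reduce to a \emph{homogeneous} solution: by the monotonicity formula for $E_\gamma^\mathrm{AP}$ (the Weiss-type functional; see \cite{AltPhillips,Bonorino}), a blow-down of $u$ at infinity is a global, homogeneous, axially symmetric stable critical point, of the appropriate homogeneity $\beta=\tfrac{2}{2-\gamma}$. Since being one-dimensional is a closed condition inherited under blow-down limits in this setting (a one-dimensional homogeneous profile of degree $\beta$ is exactly $c\,(x\cdot e)_+^\beta$, and a standard dimension-reduction/rigidity argument propagates one-dimensionality of the blow-down back to $u$ itself, as in the $\gamma=0$ case), it suffices to prove the theorem for $u$ homogeneous of degree $\beta$. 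Writing $u$ in terms of the axial variable, the free boundary $\partial\{u>0\}$ is a cone generated by a subset of the sphere $\Sph^{n-1}$, and by axial symmetry it is determined by one or two latitude angles; the core of the argument is to show this cone must be a half-space.

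\medskip

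Next I would exploit axial symmetry to pass to an ODE/one-variable picture. Parametrize $\Sph^{n-1}$ by the angle $\theta$ from the symmetry axis; then $u(r,\theta)=r^\beta g(\theta)$ on the positivity set, where $g\ge 0$ solves a second-order ODE (the spherical eigenvalue equation associated with the homogeneity $\beta$ and the nonlinearity $u^\gamma$), with $g$ vanishing at the boundary angle(s) and $g>0$ in between. The key quantity is the ratio $u^\gamma-|\nabla u|^2$ appearing on the left of~\eqref{Eq:StabilityAP1-intro}: for a homogeneous solution this is $r^{2\beta-2}(g^{\gamma}r^{2-\beta\gamma}\cdot r^{(\dots)} - \dots)$; one checks, using the ODE and the homogeneity $\beta(2-\gamma)=2$, that $u^\gamma-|\nabla u|^2$ has a clean sign/scaling, and that $u^{\gamma}\,\tfrac{u^\gamma-|\nabla u|^2}{u^2}$ is comparable to $r^{\beta\gamma-2}$ times a function of $\theta$ that degenerates only at the free boundary, where it is integrable against $\varphi^2$ for $\varphi\in C^1_c$. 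The plan is then to plug in a separated test function $\varphi(x)=r^{a}\eta(r)\,\psi(\theta)$ into~\eqref{Eq:StabilityAP1-intro}, optimize the radial exponent $a$ (this is where the cutoff $\eta$ is needed, and one takes $a$ in the critical window so that both sides scale like $\log$), and choose $\psi$ adapted to the geometry of the cone — the natural choice being related to $\partial_\theta$ of the solution or to the derivative of $u$ in a direction orthogonal to the axis. This produces a Hardy-type inequality on the spherical cap, of the form $c(n,\gamma)\int \Phi\,\psi^2\le \int(\dots)|\nabla_{\Sph}\psi|^2+\dots$, where the constant on the left carries the factor $\tfrac{2-\gamma}{2}\tfrac{\gamma}{2}$ and the dimension dependence enters through the radial optimization, yielding precisely the threshold $2+(1\mp\sqrt{1-\alpha})^2$ with $\alpha=\tfrac{2\gamma}{2-\gamma}$.

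\medskip

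The final step is the rigidity: show that the spherical inequality just obtained, under the dimension constraint~\eqref{Eq:DimensionConstrain}, forces the cap to be a hemisphere, i.e., the cone is a half-space, hence $g$ is (a multiple of) $(\cos\theta)_+^{\beta}$ — equivalently, $u$ is one-dimensional. Concretely I would test with $\psi$ equal to the restriction to the sphere of a linear function in the direction orthogonal to the axis (or with an explicit competitor built from $g$ and $g'$), compute both sides against the ODE for $g$, and derive a contradiction unless the boundary angle is exactly $\pi/2$; the strict inequalities in~\eqref{Eq:DimensionConstrain} make the contradiction strict, ruling out everything but the flat configuration. I expect the main obstacle to be \textbf{step two}: choosing the test function $\psi$ on the sphere so that the integration-by-parts against the (singular, degenerate-at-the-boundary) weight $u^\gamma$ is justified and produces the \emph{sharp} constant — the integrability of $u^{\gamma}\tfrac{u^\gamma-|\nabla u|^2}{u^2}\varphi^2$ near the free boundary must be checked carefully (using the precise boundary behavior $u\sim \operatorname{dist}^{\beta}$, $\beta=\tfrac{2}{2-\gamma}$), and the radial optimization must be carried out so that the admissible range of $\beta\gamma$, equivalently of $\gamma$, is exactly $(0,2/3)$ — this is the point where the hypothesis $\gamma<2/3$ must be used, presumably to guarantee $\beta\gamma<2$ so that the weight near the origin is locally integrable and the cutoff argument closes. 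A secondary difficulty is making the dimension-reduction in step one fully rigorous for stable (not just minimizing) critical points, which requires knowing that the stability inequality passes to blow-down limits — this should follow from the explicit form of~\eqref{Eq:StabilityAP1-intro} together with the convergence estimates for $u^\gamma$-weights, but needs to be spelled out.
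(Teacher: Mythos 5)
Your outline takes a genuinely different route from the paper, and as written it has real gaps. The paper never reduces to homogeneous solutions, never passes to a spherical ODE, and never proves a rigidity statement for spherical caps. Instead it works directly with an arbitrary global stable axially symmetric $u$: the key step (\Cref{Prop:Stabilityutau}) is to test the stability inequality \eqref{Eq:StabilityAP1-intro} with $\varphi=c\,\eta$ where $c=u^{-\gamma/2}u_\tau=v_\tau$ (with $v=\beta u^{1/\beta}$). This specific choice is essential: $v_\tau$ satisfies exactly the right linearized equation, so after integration by parts all zero-order terms cancel and one is left with the clean weighted inequality $(n-2)\int_{\{u>0\}} \tau^{-2}u_\tau^2\eta^2\le \int_{\{u>0\}} u_\tau^2|\nabla\eta|^2$ for every test function $\eta$, i.e.\ \eqref{Eq:Stabilityutau}. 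The theorem then follows by taking $\eta=\tau^{-\theta/2}\zeta_R$ (capped near $\tau=0$), using the global $C^{1,\beta-1}$ bound $u_\tau^2\le CR^\alpha$ on $B_{2R}\setminus B_R$, and letting $\varepsilon\to0$, $R\to\infty$; the argument closes iff one can pick $\theta$ with $n+\alpha-2<\theta<2\sqrt{n-2}$, which is nonempty precisely under \eqref{Eq:DimensionConstrain}, and this is where $\gamma<2/3$ (i.e.\ $\alpha<1$) enters --- not, as you guess, through local integrability of a weight near the origin.

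The concrete gaps in your plan are the following. First, the reduction to a homogeneous cone and, above all, the claim that one-dimensionality of the blow-down propagates back to $u$ is unjustified for merely \emph{stable} critical points: you would need compactness of blow-downs, persistence of stability along the limit, and an improvement-of-flatness/rigidity step, none of which is available off the shelf here and none of which the paper needs. Second, the heart of your argument (the ``Hardy-type inequality on the cap with the sharp constant'' and the cap rigidity) is only gestured at; with a generic separated test function $r^a\eta(r)\psi(\theta)$ the zero-order terms coming from $u^\gamma\frac{u^\gamma-|\nabla u|^2}{u^2}$ do not cancel, and you do not identify the weighting $u^{-\gamma/2}$ of $u_\tau$ that makes them cancel. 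Third, precisely in the range $\gamma\in(0,2/3)$ the two boundary terms produced by the integrations by parts are individually divergent, and the proof requires exhibiting their exact cancellation (the quantity $u^\gamma v_\tau\nabla v_\tau$ is bounded up to the free boundary; see Step 3 of the proof of \Cref{Prop:Stabilityutau}); your proposal flags integrability issues near the free boundary but does not contain this cancellation, and without it the argument fails exactly in the range of $\gamma$ covered by the theorem.
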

\begin{center}
	\begin{figure}[h]
		\begin{tikzpicture}[xscale=4,yscale=1, domain=0:1, samples=200]
			
			\draw[->] (-0.2,0) -- (1.1,0) node[right] {$\alpha=\frac{2\gamma}{2-\gamma}$};
			\draw[->] (0,0) -- (0,6.5) node[above] {$n$};
			\foreach \i in {0,0.25,...,1} {
				\draw (\i,0) -- (\i,-0.1) node[below] {$\i$};
			}
			
			\foreach \i in {1,2,...,6} {
				\draw (0,\i) -- (-0.05,\i) node[left] {$\i \,$ };
			}
			\draw[color=red, line width=1.5pt] 
			plot (\x,{2 + (1 - sqrt(1 - \x))^2}) ;
			\draw[color=red, line width=1.5pt] 
			plot (\x,{2 + (1 + sqrt(1 - \x))^2}) ;
			
			\draw[color=green, line width=1.8pt, fill=white, xscale=1/4, yscale=1] (0,2) circle[radius=3pt];
			
			\draw[color=green, line width=1.8pt] (0,3) -- (1,3);
			\draw[dashed] (1,3) -- (1,0);
			\draw[color=green, line width=1.8pt, fill=white, xscale=1/4, yscale=1] (4,3) circle[radius=3pt];

			\draw[color=green, line width=1.8pt] (0,4) -- (0.82842712474,4);
			\draw[dashed] (0.82842712474,4) -- (0.82842712474,0);
			\draw[color=green, line width=1.8pt, fill=white, xscale=1/4, yscale=1] (4*0.82842712474,4) circle[radius=3pt];
			
			\draw[color=green, line width=1.8pt] (0,5) -- (0.46410161513,5);
			\draw[dashed] (0.46410161513,5) -- (0.46410161513,0);
			\draw[color=green, line width=1.8pt, fill=white, xscale=1/4, yscale=1] (4*0.46410161513,5) circle[radius=3pt];
			
			\draw[color=green, line width=1.8pt, fill=white, xscale=1/4, yscale=1] (0,6) circle[radius=3pt];
			
		\end{tikzpicture}
		\caption{Visualization of the constraint \eqref{Eq:DimensionConstrain} on the dimension.}
		\label{Fig:Dimensions}
	\end{figure}
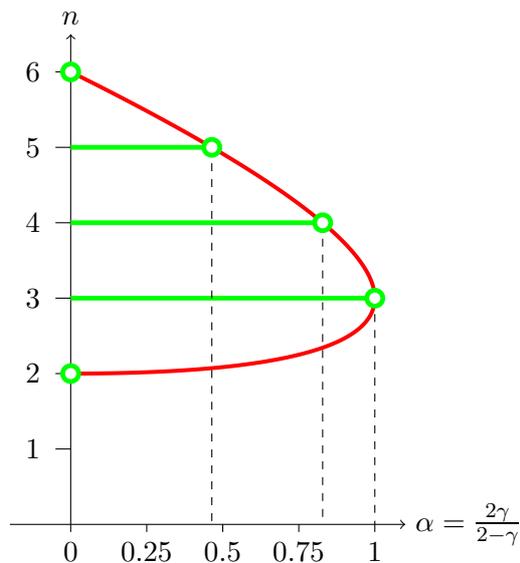
\end{center}

As a corollary, we obtain that, in the dimensions given by \eqref{Eq:DimensionConstrain}, if $u$ is an axisymmetric global stable solution whose zero phase $\{u=0\}$ has positive density, then $\{u=0\}$ must be a half-space.
We can visualize the dimension range \eqref{Eq:DimensionConstrain} in \Cref{Fig:Dimensions}.
Note that when $\gamma=0$, we get $2<n<6$, which are the dimensions obtained in \cite{FernandezRealRosOton2019global}, while for $\gamma=2/3$, i.e., $\alpha = 1$, the constraint \eqref{Eq:DimensionConstrain} becomes an empty condition.
The significance of the value $\gamma=2/3$ can be seen by considering $u=(v/\beta)^\beta$, with $\beta=\frac2{2-\gamma}$, in the Alt-Phillips functional.
We have
\begin{equation}
	E_\gamma^\mathrm{AP} [u] = \left(\frac{1}{\beta}\right)^{\gamma \beta} \int_{\{v>0\}\cap \Omega} v^{\gamma \beta} (|\nabla v|^2 + 1) \d x.
\end{equation}
and the weight $v^{\gamma \beta}$ is concave if 
$\gamma \beta<1$, which holds exactly for $\gamma\in(0, 2/3)$. 
Also in the proofs of regularity in \cite{RestrepoRosOton} the range $\gamma\in(0, 2/3)$ needs to be treated differently than $\gamma \geq 2/3$.
All this leads one to wonder whether there is a substantial change in the nature of the problem in the threshold value $\gamma = 2/3$.

The proof of \Cref{Th:AxiallySymm} follows a strategy which has a number of parallels with the regularity theory of minimal surfaces: in that case, a test function of the form $\varphi = c \eta$ is chosen in a stability inequality, where $c$ is the second fundamental form, which is a subsolution to some elliptic inequality, known as Simons' inequality; this key fact, combined with a cut-off argument choosing $\eta$ appropriately, leads to the classification of stable minimal cones up to dimension $n=7$ (see \cite{Simons}).
We follow a similar approach, choosing $\varphi = c \eta$ in \eqref{Eq:StabilityAP1-intro} where $c$ is a radial derivative of $u$ weighted with $u^{-\gamma/2}$ (see a more exhaustive discussion in \Cref{Sec:AxSymSol}).
Due to the singularity of $u^{-\gamma/2}$ at the free boundary and also the possible singularity of $\partial\{u>0\}$ at the origin, there are some difficulties that arise with this choice of $\varphi$.
To circumvent them, we use an approximation argument and a precise cancellation of some free boundary terms, which requires a fine control of $u$ and its derivatives near $\partial\{u>0\}$. 
This is the most delicate part of the proof, and it is contained in \Cref{Prop:Stabilityutau}.

\subsection*{Organization of the paper}

In \Cref{Sec:PreliminaryResults} we present some basic facts about the Alt-Phillips problem.
Then, we first obtain a general expansion of an energy functional in \Cref{Sec:GeneralExpansionAndStability}, and after that, we establish the stability condition for the Alt-Phillips problem \eqref{Eq:StabilityAP1-intro} in \Cref{Sec:StabilityInequalityn-2}. 
In \Cref{Sec:AlphaToZero} we briefly comment on the relation with the Alt-Caffarelli problem (i.e., the limit $\gamma \to 0$), and finally in  \Cref{Sec:AxSymSol} we prove our main result, \Cref{Th:AxiallySymm}.
At the end of the paper, there is a short appendix with two  elementary matrix results that we prove to make the article self-contained.

\subsection*{Acknowledgments}
Both authors are supported by the EPSRC grant EP/S03157X/1.
The second author is also supported by grants PID2020-113596GB-I00, PID2021-123903NB-I00, and RED2018-102650-T funded by MCIN/AEI/10.13039/501100011033 and by “ERDF A way of making Europe”, and by AGAUR Grant 2021 SGR 00087 (Catalunya).

\medskip

\section{Definitions and preliminary results}
\label{Sec:PreliminaryResults}

In this section we collect some results that will be used in the rest of the article.

\subsection{Regularity and behavior near the free boundary}

An important exponent in the context of the Alt-Phillips free boundary problem is
\begin{equation}
	\label{Eq:DefBeta}
	\beta := \dfrac{2}{2 - \gamma} \in (1, +\infty),
\end{equation}
which provides the precise behavior of solutions near the free boundary and gives the optimal regularity of solutions across $\partial \{u>0\}$.
Indeed, critical points of the Alt-Phillips functional \eqref{Eq:AltPhillipsFunctional} satisfy 
\begin{equation}
	\label{Eq:AltPhillipsEquation}
	\Delta u = \dfrac{\gamma}{2} u^{\gamma - 1} \quad \text{ in } \{u>0\}\cap \Omega,
\end{equation}
and also the free boundary condition
\begin{equation}
	\label{Eq:AltPhillipsFBC}
	|\nabla u| = 0 \quad \text{ on } \partial \{u>0\}\cap \Omega,
\end{equation}
and it was proved in \cite{AltPhillips} that any nonnegative minimizer of the previous free boundary problem is $C^{1, \beta-1}$ in $\Omega$.

More precisely, the exponent $\beta$ gives the exact power growth of solutions near the free boundary.
If we define the distance to the free boundary as
\begin{equation}
	\dfb := \dist(\cdot,  \{u=0\}\cap \Omega),
\end{equation}
then it follows (see \cite{AltPhillips}) that any nonnegative minimizer satisfies
\begin{equation}
	\label{Eq:BehaviorNearFbu}
	u \asymp \dfb^\beta = \dfb^\frac{2}{2-\gamma}.
\end{equation}
Here and through the paper we use the notation $w_1 \asymp w_2$ to mean that, for two functions $w_1$ and $w_2$, there exist two positive constants $ c \leq C$ such that $c w_2(x) \leq w_1(x) \leq C w_2(x)$ for all $x\in \{u>0\}$ sufficiently close to the free boundary $\partial \{u>0\}$.
Indeed, in \cite{AltPhillips} it is proved that if $u$ is a nonnegative minimizer of $E_\gamma^\mathrm{AP} [\cdot]$, then $u^{1/\beta}$ is $C^{1, \theta}$ for some $\theta>0$ inside $\{u>0\}\cap \Omega$ and up to $\partial\{u>0\}\cap \Omega$ in a neighborhood of free boundary regular points.
In the recent paper \cite{RestrepoRosOton} this has been improved, showing that $u^{1/\beta}$ is indeed $C^\infty$ up to the free boundary near regular points (and that this holds for solutions, not only minimizers). 
The same conclusion holds for $u/\dfb^{\beta}$.
These facts, in particular, yield that near regular free boundary points we have that nonnegative solutions satisfy \eqref{Eq:BehaviorNearFbu},
\begin{equation}
	\label{Eq:BehaviorNearFbNablau}
	|\nabla u| \asymp \dfb^{\beta - 1} = \dfb^{\frac{\gamma}{2 - \gamma}}
\end{equation}
and 
\begin{equation}
	\label{Eq:BehaviorNearFbHessianu}
	|D^2 u| \asymp \dfb^{\beta - 2} = \dfb^{2\frac{\gamma - 1}{2 - \gamma}}.
\end{equation}
Note that, despite the fact that in this article we always consider $\gamma \in (0,2)$, \eqref{Eq:BehaviorNearFbu} and \eqref{Eq:BehaviorNearFbNablau} are also true for $\gamma=0$ (recall that solutions to the one-phase problem are Lipschitz across the free boundary). 
Instead, the lower bound in \eqref{Eq:BehaviorNearFbHessianu} does not hold for $\gamma=0$,  as shown by the half-space solution $\max\{x_n, 0\}$.

\subsection{The modified Alt-Phillips functional}

The previous behavior of $u$ near the free boundary suggests studying the function $u^{1/\beta}$. 
Given $u\geq 0$, we define
\begin{equation}
	\label{Eq:Defv}
	v:= \beta u^{1/\beta}, \quad \text{ that is, } \quad  u = (v/\beta)^\beta.
\end{equation}
Then a simple computation noting that $1/\beta = 1 - \gamma/2$ shows that
\begin{equation}
	\label{Eq:RelationGradientsuv}
	\nabla v = \dfrac{\nabla u}{u^{\gamma/2}}, \quad \text{ that is, } \quad \nabla u = \left(\frac{v}{\beta}\right)^{\gamma \beta /2} \nabla v,
\end{equation}
and thus
\begin{equation}
	E_\gamma^\mathrm{AP} [u] = \left(\frac{1}{\beta}\right)^{\gamma \beta} \int_{\{v>0\}\cap \Omega} v^{\gamma \beta} (|\nabla v|^2 + 1) \d x.
\end{equation}
Hence, setting 
\begin{equation}
	\label{Eq:DefAlpha}
	\alpha := \gamma \beta = \dfrac{2\gamma}{2 - \gamma},
\end{equation}
it makes sense to define the modified functional
\begin{equation}
	\label{Eq:ModAltPhillipsFunctional}
	E_\alpha^\mathrm{M}[v] := \int_{\Omega} v^{\alpha} \chi_{\{v>0\}} \big(|\nabla v|^2 + 1\big) \d x.
\end{equation}

Nonnegative critical points of the functionals $E_\gamma^\mathrm{AP} [\cdot]$ and  $E_\alpha^\mathrm{M}[\cdot]$ are in correspondence through the relation \eqref{Eq:Defv}, and in particular minimality or stability properties are also shared.
Indeed, if we compute the expansion of $E_\gamma^\mathrm{AP}$ with respect to inner variations and up to second order (as in \Cref{Sec:GeneralExpansionAndStability}, see \eqref{Eq:ExpansionEnergyGeneral} below), setting $u = (v/\beta)^\beta$ we observe that the first and second order terms are, up to positive multiplicative constants, the same as in the expansion we would get for $E_\alpha^\mathrm{M}$. 
As a consequence, it follows that if $v$ is a nonnegative critical point of $E_\alpha^\mathrm{M}$, then 
\begin{equation}
	\label{Eq:ModAltPhillipsEquation}
	\Delta v = \dfrac{\alpha}{2} \dfrac{1 - |\nabla v|^2}{v} \quad \text{ in } \{v > 0\}\cap \Omega.
\end{equation}
This follows, using \eqref{Eq:RelationGradientsuv} and \eqref{Eq:AltPhillipsEquation}, from the computation
\begin{equation}
	\Delta v = u^{-\gamma/2} \Delta u - \dfrac{\gamma}{2} u^{-\gamma/2 - 1} |\nabla u|^2
	= \dfrac{\gamma}{2} \left(u^{\gamma/2 - 1} - u^{\gamma/2 - 1} |\nabla v|^2\right)
	= \dfrac{\gamma }{2} u^{-1/\beta } \left( 1 - |\nabla v|^2\right),
\end{equation}
using that $u^{-1/\beta } = \beta v^{-1}$ and recalling \eqref{Eq:DefAlpha}.
We will show in \Cref{Subsec:FirstVariation} how to obtain this equation directly from the first variation of $E_\alpha^\mathrm{M}$.
Moreover, if $\partial \{v > 0\}\cap \Omega$ is a smooth enough hypersurface, we also have the following free boundary condition:
\begin{equation}
	\label{Eq:ModAltPhillipsFBC}
	|\nabla v|^2 = 1 \quad \text{ in } \partial \{v > 0\}\cap \Omega.
\end{equation}
This will be obtained also from the first variation of $E_\alpha^\mathrm{M}$ in \Cref{Subsec:FirstVariation}.
Note that \eqref{Eq:ModAltPhillipsFBC} together with \eqref{Eq:RelationGradientsuv} gives a more precise expression of the the free boundary condition \eqref{Eq:AltPhillipsFBC} for $u$, since when $\partial \{u > 0\}\cap \Omega$ is a smooth enough hypersurface we have that
\begin{equation}
	\label{Eq:AltPhillipsFBCPrecise}
	\dfrac{|\nabla u|^2 }{u^\gamma}= 1 \quad \text{ in } \partial \{u > 0\}\cap \Omega.
\end{equation}
Both \eqref{Eq:ModAltPhillipsFBC} and \eqref{Eq:AltPhillipsFBCPrecise} must be understood as non-tangential limits as we approach the free boundary from the positive phase.

We conclude the section settling some notation.
Since along the paper we will usually integrate quantities in $\{u>0\} = \{v>0\}$ and sometimes we will use integration by parts, we define
\begin{equation}
	\label{Eq:DefNormal}
	\nu := -\dfrac{\nabla u}{|\nabla u|} = -\dfrac{\nabla v}{|\nabla v|},
\end{equation}
since $\nu$ is the outer unit normal to the positivity set $\{u>0\}$.
The surface measure on $\partial\Omega$ will be denoted by $\sigma$.

\section{A general stability condition with respect to domain variations}
\label{Sec:GeneralExpansionAndStability}

In this section we obtain a stability condition for a general functional
\begin{equation}
	\label{Eq:EnergyGeneral}
	E[u] := \int_\Omega G(u) (|\nabla u|^2 + F(u) ) \d x.
\end{equation}
Here, $F$ and $G$ are measurable functions for which we will not make any assumptions, as we want to work in full generality. 
So, for any particular choice of $F$ and $G$, we will consider $E[\cdot]$ evaluated in a set of functions for which \eqref{Eq:EnergyGeneral} is well defined.
Later in this paper, we will make a precise choice of $F$ and $G$. 
In particular, we would like to consider two cases:
\begin{itemize}
	\item $G (u) =  1$ and $F(u) = u^\gamma \chi_{\{u>0\}}$ with $\gamma \in [0,2)$.
	This corresponds to the Alt-Phillips functional \eqref{Eq:AltPhillipsFunctional}, including the Alt-Caffarelli problem (with $\gamma = 0$ we have $F(u) =  \chi_{\{u>0\}}$).
	For nonnegative $u$, this can be reformulated to $G(u) = \chi_{\{u>0\}}$ and $F(u) = u^\gamma$ with $\gamma \in [0,2)$, which in some instances may be more suitable to work with (for $\gamma = 0$ we have $F(u) = 1$).
	
	\item $G(v) =  v^\alpha \chi_{\{v>0\}}$ and $F(v) = 1$ with $\alpha >0$.
	This corresponds to the modified Alt-Phillips functional \eqref{Eq:ModAltPhillipsFunctional}, obtained  after the change $v = \beta u^{1/\beta}$, with $\alpha:= \gamma \beta$.
\end{itemize}
As will be mentioned later in \Cref{Remark:ExpansionForu}, the first case seems to lead to some problems for $\gamma >0$ (if $\gamma = 0$ both functionals are the same for nonnegative functions), and thus we will focus in the next section on the second one.

For a general functional as \eqref{Eq:EnergyGeneral} with $G\equiv 1$ and $F$ being a $C^1$ function, the usual way to obtain the Euler-Lagrange equation and the stability condition is to consider a competitor of the form $u + \varepsilon\varphi$ (sometimes called \textit{outer variation}) and simply differentiate with respect to $\varepsilon$, obtaining the semilinear equation $-\Delta u = f(u)$ in $\Omega$, where $f(u) := -F'(u)/2$, and the stability condition
\begin{equation}
	\label{Eq:StabilitySemilinear}
	- \int_{\Omega} \dfrac{F''(u)}{2} \varphi^2 \d x 
	= \int_{\Omega} f'(u) \varphi^2 \d x  
	\leq 
	\int_{\Omega} |\nabla \varphi|^2 \d x.
\end{equation}
When $F$ is not differentiable, however, one must consider \textit{inner variations}, i.e., performing a \textit{domain variation} taking competitors of the form $u\circ T_\varepsilon^{-1}$, with $T_\varepsilon$ a vector field which is the identity outside a compact set inside $\Omega$.
This is what we do in this section for the general functional \eqref{Eq:EnergyGeneral}, and will be used later with~ $E_\alpha^\mathrm{M}[\cdot]$.

The second variation formulas have a long history of applications in the theory of free boundary problems, in particular to study the vortex sheets \cite{GarabedianSchiffer}.
The latter paper contains the rigorous mathematical discussion of the classical Hadamard's variational formulas in space, with detailed calculations for the so-called interior variational method in three dimensions.
Note that the formula (3.2.22) in \cite{GarabedianSchiffer} is exactly \eqref{Eq:StabilityOnePhase-Intro}. 
The method can be easily generalized to higher dimensions; see \cite{BandleWagner} in the context of domain optimization problems. 
Observe that our second variation formula cannot be directly deduced from the one in 
\cite[Section~3]{BandleWagner}, because we do not assume that the energy density is $C^2$ with respect to $u$.
Here we compute the second variation for a functional of the form \eqref{Eq:EnergyGeneral} just using basic differential calculus, without employing any geometric notation or advanced tools.

\begin{remark}
	\label{Remark:ExteriorVariation}
	Using the approach of semilinear equations mentioned above, from \eqref{Eq:StabilitySemilinear} one can formally guess the stability inequality \eqref{Eq:StabilityOnePhase-Intro} in the Alt-Caffarelli problem ($\gamma = 0$).
	Indeed, taking $F(u) =  \chi_{\{u>0\}}$ then the distributional gradient of $F(u)$ is $ \nabla u /|\nabla u| \mathcal{H}^{n-1}\llcorner\partial\{u>0\} $, where $\mathcal{H}^{n-1}\llcorner\partial\{u>0\} $ is the restriction of the $n-1$ dimensional Hausdorff measure to $\partial{\{u>0\}}$. 
	Thus, the first and second variation of the term $|{\{u>0\}}\cap \Omega|$ in $E[\cdot]$ involve, respectively, the perimeter of ${\{u>0\}}$ and the mean curvature of $\partial{\{u>0\}}$.
	Thus, at least at a formal level, $f'(u) = H\, \mathcal{H}^{n-1}\llcorner\partial\{u>0\} $ in \eqref{Eq:StabilitySemilinear}. 
	However, when $F(u) = u^\gamma \chi_{\{u>0\}}$ with $\gamma >0$, this formal argument cannot be carried out, since the derivatives of $F(u)$ will involve $\mathcal{H}^{n-1}\llcorner\partial\{u>0\} $ and powers of $u$, and one should carefully analyze if there is a cancellation between singular and vanishing terms.
	This does not seem something intuitive to see at the formal level, as indicated by the nontrivial expression obtained in the right-hand side of our stability condition \eqref{Eq:StabilityAP1-intro} ---see \Cref{Th:StabilityCondition} below.
\end{remark}
 
We present next the expansion of \eqref{Eq:EnergyGeneral} when performing an inner variation.
In the statement and in the rest of the paper, for a mapping $\Phi = (\Phi^1, \ldots , \Phi^n)^\intercal$ we will denote its differential by 
\begin{equation}
	D\Phi = \left (
	\begin{matrix}
		\Phi^1_1 & \dots & \Phi^1_n \\ 
		\vdots &  & \vdots \\
		\Phi^n_1 & \dots & \Phi^n_n \\  
	\end{matrix} \right),
\end{equation}
where we use a subindex to denote a partial derivative, that is, $w_i := \partial_{x_i} w $ for any function $w$.
Moreover, here and through the paper we will adopt the usual convention of summation over repeated indexes.

\begin{proposition}
	\label{Prop:ExpansionAndStabilityGeneral}
	Let $\Omega \subset \R^n$ be a bounded domain with locally Lipschitz boundary, and let $u$ be an admissible function for the functional $E[\cdot]$ defined by \eqref{Eq:EnergyGeneral}.
	Given $\Phi \in C^\infty (\Omega;\R^n)$ with compact support in $\Omega$, for $\varepsilon>0$ let
	$$
	u_{\varepsilon }\left( x\right) := u(T_\varepsilon^{-1} (x) ), \quad \text{ with } \quad T_\varepsilon (x) :=x+\varepsilon \Phi(x).
	$$
	
	Then, for $\varepsilon$ small enough $T_\varepsilon$ is a diffeomorphism and thus $u_\varepsilon$ is well defined and an admissible competitor for $u$.
	Moreover, the following expansion holds:
	\begin{equation}
		\label{Eq:ExpansionEnergyGeneral}
		\begin{split}
			E[u_\varepsilon] & = \int_\Omega G(u) (|\nabla u|^2 + F(u) ) \d x \\
			& + \varepsilon \left ( \int_\Omega G(u) (|\nabla u|^2 + F(u) ) \div \Phi \d x -  2 \int_\Omega G(u)  \nabla u \cdot D\Phi \nabla u  \d x \right ) \\
			& + \varepsilon^2 \left ( \int_\Omega G(u) (|\nabla u|^2 + F(u) ) \dfrac{(\div\Phi)^2 - \Tr (D\Phi)^2}{2} \d x  
			+ \int_\Omega  G(u)  | D\Phi^\intercal \nabla u|^2 \d x  \right.\\
			& \quad \quad \quad \quad \left. + \int_\Omega G(u)  \big ( 2 u_i  u_j\Phi^i_l \Phi^l_j  - 2 \nabla u \cdot D\Phi \nabla u \div \Phi \big )\d x \right) + O(\varepsilon^3).
		\end{split}
	\end{equation}
	
	As a consequence, we say that $u$ is a stable critical point of $E[\cdot]$ if
	\begin{equation}
		\begin{split}
			\int_\Omega &G(u) (|\nabla u|^2 + F(u) ) \dfrac{(\div\Phi)^2 - \Tr (D\Phi)^2}{2} \d x  
			+ \int_\Omega  G(u)  | D\Phi^\intercal \nabla u|^2 \d x  \\
			& \quad \quad  + \int_\Omega G(u)  \big (2 u_i  u_j\Phi^i_l \Phi^l_j  - 2 \nabla u \cdot D\Phi \nabla u \div \Phi \big )\d x  \geq 0
		\end{split}		
	\end{equation}
	for every $\Phi \in C^\infty (\Omega;\R^n)$ with compact support in $\Omega$.	
\end{proposition}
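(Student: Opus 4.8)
The plan is to prove the expansion \eqref{Eq:ExpansionEnergyGeneral} by the change of variables $x = T_\varepsilon(y)$ in the integral defining $E[u_\varepsilon]$, followed by a Taylor expansion in $\varepsilon$ of the purely algebraic factors that appear. First I would record that, since $DT_\varepsilon = I + \varepsilon D\Phi$ with $\Phi \in C^\infty(\Omega;\R^n)$ compactly supported, one has $\|\varepsilon D\Phi\|_{L^\infty} < 1$ for $\varepsilon$ small, so $T_\varepsilon$ is a $C^\infty$ diffeomorphism of $\Omega$ that is the identity outside $\supp\Phi$; hence $u_\varepsilon = u\circ T_\varepsilon^{-1}$ is a well-defined admissible competitor. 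Applying the (bi-Lipschitz) change of variables $x = T_\varepsilon(y)$, and using $u_\varepsilon(T_\varepsilon(y)) = u(y)$ together with the chain rule $\nabla_x u_\varepsilon(T_\varepsilon(y)) = (DT_\varepsilon(y))^{-\intercal}\nabla u(y)$, I obtain
\begin{equation*}
	E[u_\varepsilon] = \int_\Omega G(u)\Big( \nabla u \cdot (DT_\varepsilon)^{-1}(DT_\varepsilon)^{-\intercal}\nabla u + F(u)\Big) \det(DT_\varepsilon) \d y,
\end{equation*}
all quantities on the right being evaluated at $y$, with the integrand unchanged outside $\supp\Phi$.

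Next I would expand the two matrix-valued factors. Writing $A := D\Phi$, the Neumann series gives $(I+\varepsilon A)^{-1} = I - \varepsilon A + \varepsilon^2 A^2 + O(\varepsilon^3)$ uniformly on $\supp\Phi$, hence
\begin{equation*}
	(DT_\varepsilon)^{-1}(DT_\varepsilon)^{-\intercal} = I - \varepsilon(A + A^\intercal) + \varepsilon^2\big(A^2 + (A^2)^\intercal + AA^\intercal\big) + O(\varepsilon^3),
\end{equation*}
while $\det(I+\varepsilon A)$ is a polynomial in $\varepsilon$ with $\det(I+\varepsilon A) = 1 + \varepsilon\,\Tr A + \varepsilon^2\,\tfrac{(\Tr A)^2 - \Tr(A^2)}{2} + O(\varepsilon^3)$. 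Contracting the first expansion with $\nabla u$, and using that a scalar equals its transpose ($\nabla u\cdot A^\intercal\nabla u = \nabla u\cdot A\nabla u$ and $\nabla u\cdot(A^2)^\intercal\nabla u = \nabla u\cdot A^2\nabla u$), that $\nabla u\cdot AA^\intercal\nabla u = |A^\intercal\nabla u|^2 = |D\Phi^\intercal\nabla u|^2$, and the index identity $\nabla u\cdot A^2\nabla u = u_i u_j\Phi^i_l\Phi^l_j$, I then multiply by the expansion of $\det(DT_\varepsilon)$ and collect the coefficients of $\varepsilon^0,\varepsilon^1,\varepsilon^2$ (recalling $\Tr D\Phi = \div\Phi$ and $\Tr(A^2) = \Tr(D\Phi)^2$). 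This reproduces term by term the three brackets in \eqref{Eq:ExpansionEnergyGeneral}.

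To upgrade this to a genuine expansion with an $O(\varepsilon^3)$ error for the full integral, I would observe that the remainders in the two matrix expansions are bounded pointwise on $\supp\Phi$ by $C\varepsilon^3$ with $C$ depending only on $n$ and $\|D\Phi\|_{L^\infty}$; consequently the pointwise remainder of the whole integrand is dominated by $C\varepsilon^3\,G(u)\big(|\nabla u|^2 + |F(u)|\big)$, which is integrable since $u$ is admissible, giving $O(\varepsilon^3)$ after integration over $\Omega$. The stability assertion is then immediate: for a critical point the coefficient of $\varepsilon$ vanishes for every admissible $\Phi$, and nonnegativity of the second variation is by definition the nonnegativity of the displayed coefficient of $\varepsilon^2$.

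The computation itself is elementary, so the only place requiring genuine care is the bookkeeping of the second-order matrix terms — in particular keeping $A^2$, $(A^2)^\intercal$ and $AA^\intercal$ distinct and matching each contribution to the correct term among $2u_i u_j\Phi^i_l\Phi^l_j$, $|D\Phi^\intercal\nabla u|^2$ and $-2\,\nabla u\cdot D\Phi\nabla u\,\div\Phi$ in \eqref{Eq:ExpansionEnergyGeneral} — and verifying that the remainder estimates are uniform in $x$. I want to emphasize the point already made in \Cref{Remark:ExteriorVariation}: nowhere in this argument are $F$ or $G$ assumed differentiable, which is precisely why the inner-variation route is the only one available in our setting.
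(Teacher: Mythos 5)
Your proposal is correct, and the coefficients you obtain match \eqref{Eq:ExpansionEnergyGeneral} exactly; the overall skeleton (inner variation, change of variables $x=T_\varepsilon(y)$, expansion of the Jacobian, expansion of the gradient term, multiplication and collection of orders) is the same as the paper's. The genuine difference is in how you handle the gradient term, which is the only delicate step. The paper expands the inverse \emph{map}: it writes $\nabla u_\varepsilon(y)=(DT_\varepsilon^{-1})^\intercal(y)\,\nabla u(T_\varepsilon^{-1}(y))$, derives $T_\varepsilon^{-1}(y)=y-\varepsilon\Phi+\varepsilon^2 D\Phi\,\Phi+O(\varepsilon^3)$ and $DT_\varepsilon^{-1}=Id-\varepsilon D\Phi+\varepsilon^2 D(D\Phi\,\Phi)+O(\varepsilon^3)$, invokes \Cref{Lemma:ExpansionNormSqMatrixVector}, and then must substitute $y=T_\varepsilon(x)$, which forces a further expansion of $D\Phi(x+\varepsilon\Phi(x))$ and a cancellation $-2u_iu_j\Phi^i_{jl}\Phi^l+2u_iu_j(\Phi^i_l\Phi^l)_j=2u_iu_j\Phi^i_l\Phi^l_j$ to land on \eqref{Eq:ExpansionGradient}. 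You instead use the chain-rule identity $\nabla u_\varepsilon(T_\varepsilon(y))=(DT_\varepsilon(y))^{-\intercal}\nabla u(y)$ and the Neumann series for $(Id+\varepsilon D\Phi)^{-1}$, so all matrices are evaluated at the single point $y$; this bypasses the inverse-map expansion, the evaluation-point shift, and \Cref{Lemma:ExpansionNormSqMatrixVector} altogether (you still need the determinant expansion of \Cref{Lemma:ExpansionDet}), and your bookkeeping of $A^2$, $(A^2)^\intercal$ and $AA^\intercal$ reproduces the second-order terms correctly. What your route buys is a shorter and less error-prone computation with a transparent uniform remainder; what it gives up is negligible. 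One small caveat: your domination of the remainder by $C\varepsilon^3\,G(u)\bigl(|\nabla u|^2+|F(u)|\bigr)$ is not literally implied by admissibility in the paper's stated generality (where $F,G$ are arbitrary measurable functions and only the combination $G(u)(|\nabla u|^2+F(u))$ is assumed integrable), but it is harmless in every case the paper uses ($F,G\geq 0$), and the paper's own proof is no more precise on this point.
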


\begin{proof}
	Note that since $\Phi$ has compact support, if $\varepsilon$ small enough, then $T_\varepsilon$ is a diffeomorphism and  $u_\varepsilon$ agrees with $u$ at $\partial \Omega$ ---thus it is an admissible competitor.
	Let us now compute the expansion~\eqref{Eq:ExpansionEnergyGeneral}.

	Using the change of variables $y = T_\varepsilon(x) $ we get
	\begin{equation}
		\label{Eq:ExpansionEnergyProof}
		\begin{split}
			E[u_\varepsilon] &= \int_\Omega G(u_\varepsilon) (|\nabla u_\varepsilon|^2 + F(u_\varepsilon) ) \d y \\
			&= \int_\Omega G(u(T_\varepsilon^{-1} (y))) (|\nabla u_\varepsilon (y) |^2 + F(u((T_\varepsilon^{-1} (y))) ) \d y\\
			& = \int_\Omega G(u(x)) \big( |\nabla u_\varepsilon (T_\varepsilon(x)) |^2 + F(u(x)) \big ) |\det(Id+\varepsilon D\Phi)(x)| \d x.
		\end{split}
	\end{equation}
	Note that we have the following expansion for the Jacobian:
	\begin{equation}
		\label{Eq:JacobianExpansion}
		\det(Id+\varepsilon D\Phi)=1+\varepsilon\div\Phi+\varepsilon^2 \dfrac{(\div\Phi)^2 - \Tr (D\Phi)^2}{2}+O(\varepsilon^3).
	\end{equation}
	For a proof of this just take $A=D\Phi$ in \Cref{Lemma:ExpansionDet}.
	In addition, we claim that
	\begin{equation}
		\label{Eq:ExpansionGradient}
		\begin{split}
			|\nabla u_\varepsilon(T_\varepsilon (x))|^2 &=  |\nabla u|^2 - 2 \varepsilon \nabla u \cdot D\Phi \nabla u \\
			& \quad \quad  + \varepsilon^2  \big (| D\Phi^\intercal \nabla u|^2 + 2 u_i  u_j\Phi^i_l \Phi^l_j \big )  + O(\varepsilon^3),
		\end{split}
	\end{equation}
	where everything in the right-hand side is evaluated at $x$.
	With this last expansion, the result will follow from using \eqref{Eq:JacobianExpansion} and  \eqref{Eq:ExpansionGradient} in \eqref{Eq:ExpansionEnergyProof}.

	It remains to show \eqref{Eq:ExpansionGradient}. 
	For $y\in \Omega$ we have
	\begin{equation}
		\label{Eq:ExpGradProof}
		|\nabla u_\varepsilon(y)|^2 = |\nabla (u( T_\varepsilon^{-1}(y))|^2 = |(D T_\varepsilon^{-1})^\intercal(y) \nabla u( T_\varepsilon^{-1}(y))|^2.
	\end{equation}
	Let us now obtain an expansion for $DT_\varepsilon^{-1}(y)$.
	Note that since $T_0 = Id$, we have a general expansion of the form
	$T_\varepsilon^{-1}(y) = y+\varepsilon a(y)+\varepsilon^2 b(y)+O(\varepsilon^3)$ for some $a,b\in C^\infty (\Omega;\Omega)$.
	But, since $y = T_\varepsilon( T_\varepsilon^{-1} (y) )$ we have
	\begin{equation}
		\begin{split}
			y &=  T_\varepsilon (y+\varepsilon a(y)+\varepsilon^2 b(y)+O(\varepsilon^3)) \\
			&= y+\varepsilon a(y)+\varepsilon^2 b(y)+O(\varepsilon^3) + \varepsilon \Phi\big (y+\varepsilon a(y)+\varepsilon^2 b(y)+O(\varepsilon^3)\big) \\
			&= y+\varepsilon a(y)+\varepsilon^2 b(y) + \varepsilon \big (\Phi(y) +\varepsilon D\Phi(y) a(y)+O(\varepsilon^2)\big) +O(\varepsilon^3) \\
			&= y+\varepsilon (a(y)+\Phi(y)) + \varepsilon^2 (b(y) + D\Phi(y) a(y) )  +O(\varepsilon^3).
		\end{split}
	\end{equation}
	Thus $a = -\Phi$ and $b = -D\Phi a = D\Phi \Phi$, that is,
	\begin{equation}
		\label{Eq:ExpansionInverse}
		T_\varepsilon^{-1}(y) = y-\varepsilon \Phi(y) + \varepsilon^2 (D\Phi \Phi)(y) + O(\varepsilon^3).
	\end{equation}
	Taking differentials we obtain
	\begin{equation}
		\label{Eq:ExpansionDifferentialInverse}
		DT_\varepsilon^{-1}(y) = Id-\varepsilon D\Phi(y) + \varepsilon^2 D(D\Phi \Phi)(y) + O(\varepsilon^3).
	\end{equation}

	Once we have the expansion \eqref{Eq:ExpansionDifferentialInverse}, we use \Cref{Lemma:ExpansionNormSqMatrixVector} with $q = \nabla u( T_\varepsilon^{-1}(y))$ and $M_\varepsilon = DT_\varepsilon^{-1}(y)$ (thus $A = - D\Phi(y)$ and $B = D(D\Phi \Phi)(y)$ in the notation of that lemma), and from \eqref{Eq:ExpGradProof} we get
	\begin{equation}
		\begin{split}
			|\nabla u_\varepsilon(y)|^2 
			&=  |\nabla u( T_\varepsilon^{-1}(y))|^2 - 2 \varepsilon \nabla u( T_\varepsilon^{-1}(y)) \cdot D\Phi(y) \nabla u( T_\varepsilon^{-1}(y)) \\
			& \quad \quad + \varepsilon^2  \big( | D\Phi^\intercal(y) \nabla u( T_\varepsilon^{-1}(y))|^2 + 2 \nabla u( T_\varepsilon^{-1}(y)) \cdot D(D\Phi \Phi ) (y)  \nabla u( T_\varepsilon^{-1}(y)) \big)\\
			& \quad \quad  + O(\varepsilon^3).
		\end{split}
	\end{equation}
	Hence, if we set $y = T_\varepsilon (x) = x + \varepsilon \Phi(x)$, we obtain
	\begin{equation}
		\begin{split}
			|\nabla u_\varepsilon(T_\varepsilon (x))|^2 
			&=  |\nabla u( x)|^2 
			- 2 \varepsilon \nabla u(x) \cdot D\Phi(x + \varepsilon\Phi(x)) \nabla u(x) \\
			& \quad \quad  
			+ \varepsilon^2  \big( | D\Phi^\intercal(x + \varepsilon\Phi(x)) \nabla u(x)|^2 + 2 \nabla u(x) \cdot D(D\Phi \Phi ) (x + \varepsilon\Phi(x))  \nabla u(x) \big)\\
			& \quad \quad  + O(\varepsilon^3).
		\end{split}
	\end{equation}
	Now, on the one hand, note that in the terms with $\varepsilon^2$ we can replace $x + \varepsilon\Phi(x)$ directly by $x$ since the rest only contributes to higher order terms.
	On the other hand, we will take a further expansion in the term $\nabla u(x) \cdot D\Phi(x + \varepsilon\Phi(x)) \nabla u(x)$ to obtain
	\begin{equation}
		\begin{split}
			\nabla u(x) \cdot D\Phi(x + \varepsilon\Phi(x)) \nabla u(x) 
			&= u_i (x) u_j(x) \Phi^i_j (x + \varepsilon\Phi(x)) \\
			&=  u_i (x) u_j(x) \Phi^i_j (x) + \varepsilon  u_i (x) u_j(x) \Phi^i_{jl} (x)  \Phi^l(x) +  O(\varepsilon^2).
		\end{split}
	\end{equation}
	Therefore, writing shortly $\nabla u \cdot D(D\Phi \Phi ) \nabla u = u_i (\Phi^i_l \Phi^l)_j u_j$ we finally obtain the expansion
	\begin{equation}
		\begin{split}
			|\nabla u_\varepsilon(T_\varepsilon (x))|^2 &=  |\nabla u|^2 - 2 \varepsilon \nabla u \cdot D\Phi \nabla u   + \varepsilon^2  \big (| D\Phi^\intercal \nabla u|^2 - 2  u_i  u_j \Phi^i_{jl}   \Phi^l + 2 u_i  u_j(\Phi^i_l \Phi^l)_j \big )  + O(\varepsilon^3),
		\end{split}
	\end{equation}
	where everything in the right-hand side is evaluated at $x$.
	This can be simplified to get \eqref{Eq:ExpansionGradient}, concluding the proof.
\end{proof}

\section{The stability condition for the Alt-Phillips functional}
\label{Sec:StabilityInequalityn-2}

In this section we establish one of the main results of this paper, which is the stability inequality for the Alt-Phillips problem.
The precise statement is the following:

\begin{theorem}
	\label{Th:StabilityCondition}
	Let $u\geq 0$ be a stable critical point of $E_\gamma^\mathrm{AP} [\cdot]$ or, equivalently, let $v = \beta u^{1/\beta}$ be a stable critical point of $E_\alpha^\mathrm{M} [\cdot]$.
	Assume that $v\in C^\infty(\overline{\{v>0\}}\cap \Omega)$, that the free boundary $\partial\{u>0\}$ is a smooth hypersurface in $\Omega$, and that $|\nabla v|>0$ in $\{v>0\}\cap \Omega$.
	
	Then,
	\begin{equation}
		\label{Eq:StabilityAP1}
		\dfrac{2-\gamma}{2} \dfrac{\gamma}{2}  
		\int_{\{u>0\} \cap \Omega} u^{\gamma} \dfrac{u^\gamma - |\nabla u|^2}{u^2}\varphi^2  \d x 
		\leq \int_{\{u>0\} \cap \Omega} u^{\gamma} |\nabla \varphi |^2  \d x
	\end{equation}
	or, equivalently,
	\begin{equation}
		\label{Eq:StabilityModifiedAltPhillips1}
		\dfrac{\alpha}{2} \int_{\{v>0\} \cap \Omega} v^\alpha \dfrac{1 - |\nabla v|^2}{v^2}  \varphi^2  \d x \leq \int_{\{v>0\} \cap \Omega} v^\alpha |\nabla \varphi |^2  \d x,
	\end{equation}
	for every $\varphi \in C^1_c(\Omega)$.
\end{theorem}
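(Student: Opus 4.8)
The plan is to work throughout with the modified functional $E_\alpha^\mathrm{M}[\cdot]$ and its stable critical point $v=\beta u^{1/\beta}$, establish \eqref{Eq:StabilityModifiedAltPhillips1}, and recover \eqref{Eq:StabilityAP1} from it by the change of variables $u=(v/\beta)^{\beta}$ at the very end. The starting point is \Cref{Prop:ExpansionAndStabilityGeneral} applied with $G(v)=v^{\alpha}\chi_{\{v>0\}}$ and $F(v)\equiv1$: stability of $v$ says exactly that the quadratic form $Q(\Phi)$ displayed there is nonnegative for all $\Phi\in C^\infty_c(\Omega;\R^n)$. Since $G$ is supported in $\{v>0\}$ and, for the vector fields used below, $|\nabla v|^2+1$, $D\Phi$ and $\div\Phi$ are bounded on $\supp\varphi$, the integrand of $Q$ is dominated by $C\,v^{\alpha}\in L^1_{\loc}$; hence, by mollification and dominated convergence, $Q(\Phi)\ge0$ persists for every \emph{Lipschitz} compactly supported $\Phi$, and $Q(\Phi)$ depends only on $\Phi$ restricted to $\{v>0\}$.

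Given $\varphi\in C^1_c(\Omega)$, the field I would plug in is
\[
	\Phi:=\varphi\,\frac{\nabla v}{|\nabla v|^2}\qquad\text{on }\{v>0\}\cap\Omega,
\]
extended across $\partial\{u>0\}$ and into $\{v=0\}$ to a Lipschitz, compactly supported vector field. This is legitimate: by the free boundary condition \eqref{Eq:ModAltPhillipsFBC} we have $|\nabla v|^2=1$ on $\partial\{u>0\}$, so there $\nabla v/|\nabla v|^2$ equals the inner unit normal $-\nu$; and the standing hypotheses $v\in C^\infty(\overline{\{v>0\}}\cap\Omega)$ and $|\nabla v|>0$ in $\{v>0\}\cap\Omega$ make $\nabla v/|\nabla v|^2$ smooth up to $\partial\{u>0\}$ from the positive side, so the glued field is Lipschitz and compactly supported. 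The reason for the normalization $|\nabla v|^{-2}$ is that, testing with $\Phi=\psi\,\nabla v$, the only second‑order-in-$\psi$ term of $Q(\Phi)$ is $\int_{\{v>0\}\cap\Omega}v^{\alpha}|\nabla v|^{4}|\nabla\psi|^2\,\d x$, so the choice $\psi=\varphi/|\nabla v|^2$ is exactly what produces the clean right‑hand side $\int_{\{v>0\}\cap\Omega}v^{\alpha}|\nabla\varphi|^2\,\d x$.

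With this $\Phi$ I would expand $Q(\Phi)$ in terms of $v$, $\varphi$ and their derivatives, then simplify systematically: eliminate $\Delta v$ via the Euler--Lagrange equation \eqref{Eq:ModAltPhillipsEquation}, $\Delta v=\tfrac{\alpha}{2}\,\tfrac{1-|\nabla v|^2}{v}$; eliminate $|D^2v|^2$ via the Bochner identity $\tfrac12\Delta|\nabla v|^2=|D^2v|^2+\nabla v\cdot\nabla(\Delta v)$; and integrate by parts in $\{v>0\}\cap\Omega$ to recombine the remaining first‑order terms. A long but mechanical cancellation should yield the identity
\[
	\int_{\{v>0\}\cap\Omega}v^{\alpha}|\nabla\varphi|^2\,\d x-\frac{\alpha}{2}\int_{\{v>0\}\cap\Omega}v^{\alpha}\,\frac{1-|\nabla v|^2}{v^2}\,\varphi^2\,\d x=Q(\Phi)+R,
\]
with a nonnegative remainder $R$ (which one expects to vanish identically); since $Q(\Phi)\ge0$, this is \eqref{Eq:StabilityModifiedAltPhillips1}. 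Translating back to $u$ is then elementary: with $v^{\alpha}=\beta^{\alpha}u^{\gamma}$, $v^{2}=\beta^{2}u^{2-\gamma}$, $|\nabla v|^2=|\nabla u|^2/u^{\gamma}$ (from \eqref{Eq:RelationGradientsuv}), and the identity $\tfrac{\alpha}{2\beta^{2}}=\tfrac{2-\gamma}{2}\,\tfrac{\gamma}{2}$, dividing \eqref{Eq:StabilityModifiedAltPhillips1} by $\beta^{\alpha}>0$ gives precisely \eqref{Eq:StabilityAP1}.

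The integrations by parts are the step I expect to be the main obstacle. Several intermediate quantities carry the weights $v^{\alpha-1}$ or $v^{\alpha-2}$, which are singular at $\partial\{u>0\}$ when $\alpha<1$ (i.e.\ $\gamma<2/3$); they must be handled by first working on the sets $\{v>\varepsilon\}$, retaining the boundary integrals over the smooth level sets $\{v=\varepsilon\}$, and then letting $\varepsilon\to0$. One must check (i) that the intermediate integrands are genuinely integrable up to $\partial\{u>0\}$ — this holds because the singular weights always occur multiplied by powers of the geometrically vanishing factor $1-|\nabla v|^2$, which vanishes \emph{linearly} at the free boundary since $v$ is smooth up to $\partial\{u>0\}$ and $|\nabla v|^2=1$ there — and (ii) that the boundary integrals over $\{v=\varepsilon\}$ tend to $0$; using $v\equiv\varepsilon$ on $\{v=\varepsilon\}$, the bounds $v\asymp\dfb$, $|\nabla v|\asymp1$, $|D^2v|$ bounded, and $1-|\nabla v|^2=O(\varepsilon)$ on $\{v=\varepsilon\}$, one shows these boundary terms are $O(\varepsilon^{\alpha})$ or cancel against one another, hence disappear in the limit. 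This precise cancellation of free boundary terms is the delicate point, and it is exactly where the smoothness of $v$ up to the free boundary and the exact free boundary condition $|\nabla v|^2=1$ (equivalently \eqref{Eq:AltPhillipsFBCPrecise}) enter; as observed in \Cref{Remark:ExteriorVariation}, such a cancellation between singular and vanishing terms has no counterpart in the semilinear theory.
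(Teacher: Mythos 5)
Your strategy coincides with the paper's own proof: the paper likewise works with the modified functional, plugs the inner variation $\Phi=\varphi\,\nabla v/|\nabla v|^2$ (i.e.\ $\Phi=\nabla v\,\xi$ with $\xi=\varphi/|\nabla v|^2$) into the quadratic form of \Cref{Prop:ExpansionAndStabilityGeneral} with $G(v)=v^\alpha\chi_{\{v>0\}}$, $F\equiv1$, simplifies using the equation \eqref{Eq:ModAltPhillipsEquation} and the identity $\tfrac12\Delta(|\nabla v|^2)=v_{ij}^2+\nabla v\cdot\nabla(\Delta v)$, performs the integrations by parts on $\{v>\delta\}$ and lets $\delta\to0$ (the weight $v^\alpha$ with $\alpha>0$ makes the level-set boundary contributions vanish, since $\Delta v$, $\nabla(|\nabla v|^2)$ and $|\nabla v|^{-4}$ stay bounded up to the smooth free boundary), and only at the end changes variables back to $u$. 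Your side checks are accurate: the observation that the only term quadratic in $\nabla\psi$ for $\Phi=\psi\nabla v$ is $\int v^\alpha|\nabla v|^4|\nabla\psi|^2$, the Lipschitz admissibility of the glued field (using $|\nabla v|=1$ on the free boundary), the integrability of $v^{\alpha-2}(1-|\nabla v|^2)$ thanks to the linear vanishing of $1-|\nabla v|^2$, and the constant $\tfrac{\alpha}{2\beta^2}=\tfrac{2-\gamma}{2}\tfrac{\gamma}{2}$ in the passage from \eqref{Eq:StabilityModifiedAltPhillips1} to \eqref{Eq:StabilityAP1} all agree with the paper.

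The shortfall is that the decisive step is asserted rather than proved: you write that a "long but mechanical cancellation should yield" the identity relating $Q(\Phi)$ to $\int v^\alpha|\nabla\varphi|^2-\tfrac{\alpha}{2}\int v^\alpha\tfrac{1-|\nabla v|^2}{v^2}\varphi^2$ up to a remainder $R\ge0$ "which one expects to vanish identically", but you neither carry out the computation nor identify $R$. That cancellation is the entire technical content of \Cref{Th:StabilityCondition}; the conclusion does not follow from $Q(\Phi)\ge0$ until the identity is actually exhibited. The paper does this explicitly, splitting the second variation into $I_1,I_2,I_3$, collecting the divergence-form terms into $I_{\mathrm{parts}}$, integrating by parts, and using the equation twice, ending with the exact identity (remainder identically zero). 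So the gap is one of execution, not of a wrong idea: every prediction you make is confirmed by the paper's computation, but as written the proof is incomplete until that computation is performed.
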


Before proceeding, a couple of remarks are in order.

\begin{remark}
	\label{Remark:AssumptionsStabilityCondition}
	Note that the assumption on the regularity of $v$ up to the free boundary is natural in view of the comments in \Cref{Sec:PreliminaryResults}, taking into account that we also assume that the free boundary is smooth.
	Regarding this second assumption, it is the usual one in the context of free boundary problems, and one of the technical points in the proofs (in \cite{CaffarelliJerisonKenig}, or in \Cref{Prop:Stabilityutau} below) is to use a cut-off function near singular free boundary points (typically the vertex of a cone) to circumvent this issue.	
	Finally, the nonvanishing assumption on $|\nabla v|$ is just a technical matter since in the proof we use a vector field of the form $\nabla v \varphi/|\nabla v|^2$ and we need to ensure that it is not singular. 
	Nevertheless, this assumption is not a problem in practical applications: the stability inequality is usually considered either for homogeneous solutions (for which the gradient does not vanish anywhere in the positivity set) or tested with $\varphi$ such that $|\varphi|\leq |\nabla v|\eta$ for a smooth function $\eta$ (and thus the nonvanishing assumption on $|\nabla v|$ can be removed).	
\end{remark}

\begin{remark}
	\label{Remark:IntegrabilityStabilityCondition}
	Recall that at regular free boundary points we have $u^\gamma = |\nabla u|^2 = 0$.
	The free boundary condition \eqref{Eq:ModAltPhillipsFBC} together with the relation \eqref{Eq:RelationGradientsuv} and the behavior of $u$ near the free boundary yields that 
	\begin{equation}
		\label{Eq:CancellationugammaMinusGradient}
		u^\gamma - |\nabla u|^2 \asymp \dfb^{\alpha + 1} = \dfb^{\frac{2 + \gamma}{2 - \gamma}}.
	\end{equation}
	Thus, the integrand in the left-hand side of \eqref{Eq:StabilityAP1} behaves as $\dfb^{\alpha - 1} \varphi^2$ near the free boundary, and thus it is integrable for all $\alpha >0$.
	Here we have used crucially the cancellation given by $u^\gamma - |\nabla u|^2$.
	This means that, if we split this difference into two terms in some computations, we only get integrands which behave as $\dfb^{\alpha - 2}$ near the free boundary, which would require $\alpha > 1$ (that is, $\gamma > 2/3$) so that the integrals are finite.
	The same happens if we split $1-|\nabla v|^2$ in \eqref{Eq:StabilityModifiedAltPhillips1}.		
\end{remark}

To establish the \Cref{Th:StabilityCondition} we use the general expansion \eqref{Eq:ExpansionEnergyGeneral} of \Cref{Prop:ExpansionAndStabilityGeneral} with $u$ replaced by $v$, and with $G(v) = v^\alpha \chi_{\{v>0\}}$ and $F(v) = 1$.
We have the following expansion:
\begin{equation}
	\label{Eq:ExpansionModifiedAltPhillips}
	\begin{split}
		E_\alpha^\mathrm{M}[v_\varepsilon] & = E_\alpha^\mathrm{M}[v] 
		+ \varepsilon  \int_{\{v>0\} \cap \Omega} \left( v^\alpha (|\nabla v|^2 + 1 ) \div \Phi - 2 \nabla v \cdot D\Phi \nabla v \right) \d x  \\
		&  \quad  + \varepsilon^2 \int_{\{v>0\} \cap \Omega} v^\alpha \left( (|\nabla v|^2 +1 ) \dfrac{(\div\Phi)^2 - \Tr (D\Phi)^2}{2} + |D\Phi^\intercal \nabla v|^2  + 2 v_i v_j(\Phi^i_l \Phi^l_j - \Phi^i_j \Phi^l_l)    \right) \d x  \\
		& \quad  + O(\varepsilon^3),
	\end{split}
\end{equation}
for any $\Phi \in C^1(\Omega;\R^n)$ with compact support in $\Omega$.
From this, the stability inequality \eqref{Eq:StabilityModifiedAltPhillips1} will be obtained by considering that the term with $\varepsilon^2$ is nonnegative, after a suitable choice of the vector field $\Phi$ in the direction of $\nabla v$.
For the sake of completeness, before the proof we will show next how to get, from the previous expansion, the equation \eqref{Eq:ModAltPhillipsEquation} and the free boundary condition~\eqref{Eq:ModAltPhillipsFBC} for critical points of the functional.

\subsection{First variation}
\label{Subsec:FirstVariation}

Assume that $v$ is a critical point of the functional $E_\alpha^\mathrm{M}[\cdot]$ and that $v$ is regular enough to carry out the computations below.
In particular, assume that $\partial \{v>0\} \cap \Omega$ is a smooth hypersurface.
Then,
\begin{equation}
	\int_{\{v>0\} \cap \Omega} \left( v^\alpha (|\nabla v|^2 + 1 ) \div \Phi - 2 \nabla v \cdot D\Phi \nabla v \right) \d x = 0
\end{equation}
for every $\Phi \in C^1(\Omega;\R^n)$ with compact support.

For $\varepsilon>0$, we take 
\begin{equation}
	\Phi = 
	\begin{cases}
		 v^{-\alpha} \Psi & \text{ in } \{v>\varepsilon\} \cap \Omega, \\
		 \varepsilon^{-\alpha} \Psi & \text{ in } \{0 <v \leq \varepsilon\} \cap \Omega, \\
	\end{cases}
\end{equation}
with $\Psi \in C^1(\Omega;\R^n)$ with compact support.
Note that since we are always integrating in $\{v>0\} \cap \Omega$ there is no need to specify the values of $\Phi$ in $\{v=0\} \cap \Omega$. 
As long as $\Phi$ is $C^1$ up to $\partial \{v>0\} \cap \Omega$ and the intersection of the free boundary with the support of $\Phi$ is smooth, we can extend $\Phi$ across the free boundary as a $C^1$ function with compact support.
Note also that, by approximation, we can take $\Phi$ just being Lipschitz and therefore the previous choice is admissible.

Taking $\Phi$ as above, we have
\begin{equation}
\begin{split}
	0 &= \int_{\{0 < v< \varepsilon \} \cap \Omega} \left(\dfrac{v}{\varepsilon} \right)^\alpha \left( (|\nabla v|^2 + 1 )  \Psi_j^j - 2v_i v_j \Psi_i^j  \right) \d x \\
	& \quad \quad + \int_{\{v>\varepsilon\} \cap \Omega} v^\alpha \left( (|\nabla v|^2 + 1 )  (v^{-\alpha} \Psi^j)_j - 2v_i v_j (v^{-\alpha} \Psi^j)_i  \right) \d x \\
	& = \int_{\{0 < v< \varepsilon \} \cap \Omega} \left(\dfrac{v}{\varepsilon} \right)^\alpha \left( (|\nabla v|^2 + 1 )  \Psi_j^j - 2v_i v_j \Psi_i^j  \right) \d x \\
	& \quad \quad - \alpha  \int_{\{v>\varepsilon\} \cap \Omega}  \left( (|\nabla v|^2 + 1 )  v^{-1} v_j \Psi^j - 2v_i^2 v_j v^{-1} \Psi^j \right) \d x \\
	& \quad \quad + \int_{\{v>\varepsilon\} \cap \Omega}  \left( (|\nabla v|^2 + 1 )  \Psi^j_j - 2v_i v_j \Psi^j_i  \right) \d x \\
	& = \int_{\{0 < v< \varepsilon \} \cap \Omega} \left(\dfrac{v}{\varepsilon} \right)^\alpha \left( (|\nabla v|^2 + 1 )  \Psi_j^j - 2v_i v_j \Psi_i^j  \right) \d x \\
	& \quad \quad - \alpha  \int_{\{v>\varepsilon\} \cap \Omega}   \dfrac{1 - |\nabla v|^2 }{v}  v_j \Psi^j  \d x 
	+ \int_{\{v>\varepsilon\} \cap \Omega}  \left( (|\nabla v|^2 + 1 )  \Psi^j_j - 2v_i v_j \Psi^j_i  \right) \d x.
\end{split}
\end{equation}
Now we let $\varepsilon \to 0$ and by dominated convergence the first integral converges to zero.
For the remaining terms, using integration by parts we obtain 
\begin{equation}
	\begin{split}
		0 &= - \alpha  \int_{\{v>0\} \cap \Omega}   \dfrac{1 - |\nabla v|^2 }{v}  v_j \Psi^j  \d x 
		+ \int_{\{v>0\} \cap \Omega}  \left( (|\nabla v|^2 + 1 )  \Psi^j_j - 2v_i v_j \Psi^j_i  \right) \d x \\
		&= - \alpha  \int_{\{v>0\} \cap \Omega}   \dfrac{1 - |\nabla v|^2 }{v}  v_j \Psi^j  \d x 
		- \int_{\partial \{v>0\} \cap \Omega}  \left( (|\nabla v|^2 + 1 ) \dfrac{v_j}{|\nabla v|} \Psi^j  - 2 \dfrac{v_i^2}{|\nabla v|} v_j\Psi^j  \right) \d \sigma \\
		& \quad \quad - \int_{\{v>0\} \cap \Omega}  \left( 2v_i v_{ij}  \Psi^j- 2v_{ii} v_j \Psi^j - 2v_i v_{ji} \Psi^j  \right) \d x \\
		& = - \alpha  \int_{\{v>0\} \cap \Omega}   \dfrac{1 - |\nabla v|^2 }{v}  v_j \Psi^j  \d x + \int_{\{v>0\} \cap \Omega}  2 \Delta v  v_j \Psi^j \d x  \\
		& \quad \quad - \int_{\partial \{v>0\} \cap \Omega} \dfrac{1 - |\nabla v|^2}{|\nabla v|}  v_j \Psi^j   \d \sigma .
	\end{split}
\end{equation}
From here we obtain both the equation \eqref{Eq:ModAltPhillipsEquation} and the free boundary condition \eqref{Eq:ModAltPhillipsFBC}.

\subsection{Second variation}

We now prove \Cref{Th:StabilityCondition}, obtaining the stability condition for the Alt-Phillips problem.

\begin{proof}[Proof of \Cref{Th:StabilityCondition}]
	We will establish first \eqref{Eq:StabilityModifiedAltPhillips1}.
	To do it, we abbreviate the second-order term in the energy expansion \eqref{Eq:ExpansionModifiedAltPhillips} as
	\begin{equation}
		\begin{split}
			\int_{\{v>0\} \cap \Omega} & v^\alpha \left( (|\nabla v|^2 +1 ) \dfrac{(\div\Phi)^2 - \Tr (D\Phi)^2}{2} + |D\Phi^\intercal \nabla v|^2  + 2 v_i v_j(\Phi^i_l \Phi^l_j - \Phi^i_j \Phi^l_l)    \right) \d x \\
			&=: I_1 + I_2 + I_3,
		\end{split}
	\end{equation} 
	and the stability inequality will follow from the fact that 
	$I_1 + I_2 + I_3 \geq 0$ after a suitable choice of $\Phi$ and some computations.
	As done in the first variation, since all the integrals are only computed in $\{v>0\}\cap \Omega$ and the free boundary is smooth, we will only specify the definition of our test functions in $\{v>0\}\cap \Omega$ assuming that they are extended to the whole $\Omega$ as a $C^1$ function with compact support.
	
	We take 
	\begin{equation}
		\Phi = \nabla v \xi,
	\end{equation}
	with $\xi \in C^1_c(\Omega)$ and deal with each term separately. 
	Eventually we will take 
	\begin{equation}
		\xi = \varphi/ |\nabla v|^2
	\end{equation}
	with $\varphi \in C^1_c(\Omega)$.
	Note that $v$ is $C^\infty$ up to the free boundary near regular points, and since $|\nabla v|$ does not vanish, this is an admissible test function to choose.

	Now we compute each term in the second variation separately,
	just using some general differential identities (in particular, without using any knowledge of which equation $v$ satisfies).
	Later we will add $I_1$, $I_2$, and $I_3$, and we will integrate by parts in some terms and finally use the equation to get~ \eqref{Eq:StabilityModifiedAltPhillips1}.
	
	\medskip
	$\bullet$ \textbf{Computation of $I_1$.}	
	On the one hand, 
	\begin{equation}
		(\div\Phi)^2 = (\div (\nabla v \xi) )^2 = (\Delta v \xi + \nabla v \cdot \nabla \xi)^2= (\Delta v)^2 \xi^2 +2 \Delta v \xi \nabla v \cdot \nabla \xi+ (\nabla v \cdot \nabla \xi)^2 .
	\end{equation}
	On the other hand, since for a matrix $A = (a_{ij})$ it holds $ \Tr A^2 = a_{i j } a_{ji}$, we have
	\begin{equation}
		\begin{split}
			\Tr (D\Phi)^2 &= \Phi^i_j \Phi^j_i = (v_i \xi)_j (v_j \xi)_i 
			= (v_{ij}\xi + v_i \xi_j)(v_{ji} \xi + v_j \xi_i) \\
			& = v_{ij}^2 \xi^2 + v_{ij} v_j \xi \xi_i +  v_i v_{ji} \xi \xi_j + v_i \xi_j v_j \xi_i \\
			& = v_{ij}^2 \xi^2 + 2 v_{ij} v_j \xi \xi_i + (\nabla v \cdot \xi)^2.
		\end{split}
	\end{equation}
	Thus, we get 
	\begin{equation}
		\begin{split}
			(\div\Phi)^2 - \Tr (D\Phi)^2 
			&= (\Delta v)^2 \xi^2 +  \Delta v \nabla v \cdot \nabla \xi^2    - v_{ij}^2 \xi^2 - \dfrac{1}{2} \nabla (|\nabla v|^2) \cdot \nabla \xi^2 \\
			& = \div \left[ \left(\Delta v \nabla v - \dfrac{1}{2} \nabla (|\nabla v|^2)\right ) \xi^2 \right] ,
		\end{split}	
	\end{equation}
	where in the last equality we have used that $\Delta  (|\nabla v|^2) ) = 2 v_{ij}^2 + 2 v_i v_{ijj} = 2 v_{ij}^2 + 2 \nabla (\Delta v) \cdot \nabla v$.
	Consequently, setting $\xi = \varphi/|\nabla v|^2$ we obtain
	\begin{equation}
		I_1 =  \dfrac{1}{2}\int_{\{v>0\} \cap \Omega} v^\alpha (|\nabla v|^2 +1 ) \div \left[ \left( \Delta v \dfrac{\nabla v }{|\nabla v|^4} - \dfrac{1}{2} \dfrac{\nabla (|\nabla v|^2)}{|\nabla v|^4} \right ) \varphi^2 \right] \d x.
	\end{equation}

	\medskip
	$\bullet$ \textbf{Computation of $I_2$.}
	Note that
	\begin{equation}
		|D\Phi^\intercal \nabla v|^2 = \sum_i \left[\sum_m v_{m} \Phi^{m}_{i} \right]^2.
	\end{equation}
	Thus, after the choice $\Phi = \nabla v \xi$, we get
	\begin{equation}
		\begin{split}
			|D\Phi^\intercal \nabla v|^2 & = \sum_i \left[\sum_m v_{m} (v_m \xi)_i \right]^2 = \sum_i \left[\sum_m v_{m}^2 \xi_i + \sum_m v_{m} v_{mi} \xi \right]^2 
			= \sum_i \left[|\nabla v|^2 \xi_i + \dfrac{1}{2} (|\nabla v|^2)_i \xi \right]^2.
		\end{split}
	\end{equation}
	Now, let us take $\xi = \varphi / |\nabla v|^2$. 
	We have
	\begin{equation}
		\begin{split}
			|\nabla v|^2 \xi_i + \dfrac{1}{2} (|\nabla v|^2)_i \xi & = |\nabla v|^2 \left(\frac{\varphi}{|\nabla v|^2}  \right)_i +  \dfrac{1}{2} \dfrac{(|\nabla v|^2)_i}{|\nabla v|^2} \varphi 
			= \varphi_i - |\nabla v|^2 \frac{1}{|\nabla v|^4}  (|\nabla v|^2)_i \varphi +  \dfrac{1}{2} \dfrac{(|\nabla v|^2)_i}{|\nabla v|^2} \varphi \\
			&=  \varphi_i - \dfrac{1}{2} \dfrac{(|\nabla v|^2)_i}{|\nabla v|^2} \varphi .
		\end{split}
	\end{equation}
	Hence, 
	\begin{equation}
		\begin{split}
			|D\Phi^\intercal \nabla v|^2 & =  \sum_i \left[\varphi_i - \dfrac{1}{2} \dfrac{(|\nabla v|^2)_i}{|\nabla v|^2} \varphi \right]^2 = \sum_i \left [ \varphi_i^2 -  \dfrac{(|\nabla v|^2)_i}{|\nabla v|^2} \varphi  \varphi_i + \dfrac{1}{4} \dfrac{(|\nabla v|^2)_i^2}{|\nabla v|^4} \varphi^2 \right] \\
			&= |\nabla \varphi |^2 - \dfrac{1}{2} \dfrac{\nabla(|\nabla v|^2)}{|\nabla v|^2} \cdot \nabla \varphi^2 + \dfrac{1}{4} \dfrac{|\nabla (|\nabla v|^2)|^2}{|\nabla v|^4} \varphi^2 \\
			&= |\nabla \varphi |^2 
			- \dfrac{1}{2} \div \left( \dfrac{\nabla(|\nabla v|^2)}{|\nabla v|^2}  \varphi^2  \right) 
			+ \dfrac{1}{2} \dfrac{\Delta(|\nabla v|^2)}{|\nabla v|^2}  \varphi^2 
			- \dfrac{1}{4} \dfrac{|\nabla (|\nabla v|^2)|^2}{|\nabla v|^4} \varphi^2.
		\end{split}
	\end{equation}

	For convenience in forthcoming computations, let us compute the Laplacian in the third term.
	We get
	\begin{equation}
		\dfrac{1}{2}  \dfrac{\Delta(|\nabla v|^2)}{|\nabla v|^2}  \varphi^2   
		=  \dfrac{\nabla (\Delta v) \cdot \nabla v}{|\nabla v|^2}  \varphi^2  
		+ \dfrac{v_{ij}^2}{|\nabla v|^2} \varphi^2 
	\end{equation}
	and thus we obtain 
	\begin{equation}
		\begin{split}
			I_2 &= \int_{\{v>0\} \cap \Omega} v^\alpha |\nabla \varphi |^2  \d x 
			- \dfrac{1}{2}\int_{\{v>0\} \cap \Omega} v^\alpha \div \left( \dfrac{\nabla(|\nabla v|^2)}{|\nabla v|^2}  \varphi^2  \right)   \d x \\
			& \quad \quad + \int_{\{v>0\} \cap \Omega} v^\alpha \dfrac{\nabla (\Delta v) \cdot \nabla v}{|\nabla v|^2}  \varphi^2   \d x + 
			\int_{\{v>0\} \cap \Omega} v^\alpha \dfrac{v_{ij}^2}{|\nabla v|^2} \varphi^2  \d x 
			- \dfrac{1}{4} 	\int_{\{v>0\} \cap \Omega} v^\alpha \dfrac{|\nabla (|\nabla v|^2)|^2}{|\nabla v|^4} \varphi^2 \d x.
		\end{split} 
	\end{equation}

	
	\medskip
	$\bullet$ \textbf{Computation of $I_3$.}	
	Taking $\Phi = \nabla v \xi$, we compute
	\begin{equation}
		\begin{split}
			\Phi^i_l \Phi^l_j-\Phi^i_j \Phi^l_l 
			&= (v_i \xi)_l (v_l \xi)_j-(v_i \xi)_j (v_l \xi)_l 
			= (v_{il} \xi + v_i \xi_l) (v_{lj}\xi + v_l \xi_j) - (v_{ij} \xi + v_i \xi_j) (v_{ll} \xi + v_l \xi_l) \\
			&= v_{il} v_{lj} \xi^2 + v_l v_{il} \xi \xi_j + v_i v_{lj} \xi \xi_l + v_i \xi_l v_l \xi_j 
			- v_{ij} v_{ll} \xi^2 - v_{ij}  v_l \xi\xi_l -  v_i v_{ll} \xi \xi_j - v_i \xi_j v_l \xi_l \\
			&= (v_{il} v_{lj} - v_{ij} \Delta v) \xi^2 
			+ \dfrac{1}{2} \left ( \dfrac{1}{2} (|\nabla v|^2)_i - v_i \Delta v \right) (\xi^2)_j 
			+ \dfrac{1}{2}( v_i v_{lj} -  v_{ij}  v_l)  (\xi^2)_l  . 
		\end{split}
	\end{equation}
	Hence, 
	\begin{equation}
		\begin{split}
			2 v_i v_j (\Phi^i_l \Phi^l_j-\Phi^i_j \Phi^l_l )
			&= 2 ( v_i v_{il}  v_j  v_{lj} - v_i v_j  v_{ij} \Delta v) \xi^2 
			+ \left ( \dfrac{1}{2}  v_i (|\nabla v|^2)_i - v_i^2 \Delta v \right) v_j (\xi^2)_j \\
			& \quad \quad + ( v_i^2 v_j v_{lj} - v_i v_j  v_{ij}  v_l)  (\xi^2)_l  \\
			&= \left ( \dfrac{1}{2} |\nabla (|\nabla v|^2)|^2 -  \nabla v \cdot \nabla (|\nabla v|^2) \Delta v\right ) \xi^2 \\
			&\quad  \quad + \left ( \dfrac{1}{2}  \nabla v \cdot \nabla (|\nabla v|^2) - |\nabla v|^2 \Delta v \right) \nabla v \cdot \nabla \xi^2 \\
			& \quad \quad + \left( \dfrac{1}{2} |\nabla v|^2 \nabla (|\nabla v|^2) -\dfrac{1}{2} \nabla v \cdot   (|\nabla v|^2) \nabla v \right) \cdot  \nabla \xi^2 \\
			&= \left ( \dfrac{1}{2} |\nabla (|\nabla v|^2)|^2 -  \nabla v \cdot \nabla (|\nabla v|^2) \Delta v\right ) \xi^2  \\
			& \quad \quad + \dfrac{1}{2} |\nabla v|^2 \nabla (|\nabla v|^2) \cdot  \nabla \xi^2  
			-  |\nabla v |^2  \Delta v \nabla v \cdot \nabla \xi^2\\
			& = \div \left[ \left( \dfrac{1}{2} |\nabla v|^2 \nabla (|\nabla v|^2) -  |\nabla v |^2  \Delta v \nabla v   \right)\xi^2 \right] \\
			& \quad \quad - \dfrac{1}{2}|\nabla v |^2 \Delta 	(|\nabla v |^2) \xi ^2 + |\nabla v |^2 \nabla (\Delta v) \cdot \nabla v \xi^2 + |\nabla v |^2 (\Delta v)^2 \xi^2 \\
			& = \div \left[ \left( \dfrac{1}{2} |\nabla v|^2 \nabla 	(|\nabla v|^2) -  |\nabla v |^2  \Delta v \nabla v   \right)\xi^2 \right]   + |\nabla v |^2 ((\Delta v)^2 - v_{ij}^2) \xi^2 .
		\end{split}
	\end{equation}
	In the last equality we have used again the identity $\Delta  (|\nabla v|^2) ) = 2 v_{ij}^2 + 2 v_i v_{ijj} = 2 v_{ij}^2 + 2 \nabla (\Delta v) \cdot \nabla v$.
	Taking $\xi = \varphi / |\nabla v|^2$, we have obtained
	\begin{equation}
		\begin{split}
			I_3 &= \int_{\{v>0\} \cap \Omega} v^\alpha \dfrac{(\Delta v)^2 - v_{ij}^2}{|\nabla v|^2} \varphi^2 \d x + \int_{\{v>0\} \cap \Omega} v^\alpha \div \left[ \left( \dfrac{1}{2} \dfrac{\nabla (|\nabla v|^2)}{|\nabla v |^2} -    \dfrac{\Delta v \nabla v }{|\nabla v |^2}  \right)\varphi^2 \right] \d x.
		\end{split} 
	\end{equation}
	
	\pagebreak
	
	\medskip
	$\bullet$ \textbf{Combining all three terms, integrating by parts, and using the equation.}
	
	Putting together the previous computations we obtain
	\begin{equation}
		\label{Eq:2ndVarModifiedAltPhillipsPreInt}
		\begin{split}
			I_1 + I_2 + I_3 
			& = \int_{\{v>0\} \cap \Omega} v^\alpha |\nabla \varphi |^2  \d x \\
			& \quad \quad + \dfrac{1}{2}\int_{\{v>0\} \cap \Omega} v^\alpha (|\nabla v|^2 +1 ) \div \left[ \left( \Delta v \dfrac{\nabla v }{|\nabla v|^4} - \dfrac{1}{2} \dfrac{\nabla (|\nabla v|^2)}{|\nabla v|^4} \right ) \varphi^2 \right] \d x \\
			& \quad \quad -\int_{\{v>0\} \cap \Omega} v^\alpha \div \left[   \dfrac{\Delta v \nabla v }{|\nabla v |^2} \varphi^2 \right] \d x \\
			& \quad \quad 
			+ \int_{\{v>0\} \cap \Omega} v^\alpha \dfrac{\nabla (\Delta v) \cdot \nabla v}{|\nabla v|^2}  \varphi^2   \d x 
			- \dfrac{1}{4} \int_{\{v>0\} \cap \Omega} v^\alpha \dfrac{|\nabla (|\nabla v|^2)|^2}{|\nabla v|^4} \varphi^2 \d x \\
			& \quad \quad + \int_{\{v>0\} \cap \Omega} v^\alpha \dfrac{(\Delta v)^2 }{|\nabla v|^2} \varphi^2 \d x. \\
		\end{split}
	\end{equation}
	We denote by $I_{\mathrm{parts}}$ the terms in the previous expression which we will integrate by parts, that is,
	\begin{equation}
		\label{Eq:Iparts}
		\begin{split}
			I_{\mathrm{parts}} &:= 
			\dfrac{1}{2}\int_{\{v>0\} \cap \Omega} v^\alpha (|\nabla v|^2 +1 ) \div \left[ \left( \Delta v \dfrac{\nabla v }{|\nabla v|^4} - \dfrac{1}{2} \dfrac{\nabla (|\nabla v|^2)}{|\nabla v|^4} \right ) \varphi^2 \right] \d x \\
			& \quad \quad  -\int_{\{v>0\} \cap \Omega} v^\alpha \div \left[   \dfrac{\Delta v \nabla v }{|\nabla v |^2} \varphi^2 \right] \d x.
		\end{split}
	\end{equation} 
	Integrating by parts we obtain 
	\begin{equation}
		\begin{split}
			I_{\mathrm{parts}}  & = - \dfrac{1}{2}\int_{\{v>0\} \cap \Omega} v^\alpha  (  \alpha v^{-1}  (|\nabla v|^2 +1 ) \nabla v  + \nabla (|\nabla v|^2 ) ) \cdot  \left( \Delta v \dfrac{\nabla v }{|\nabla v|^4} - \dfrac{1}{2} \dfrac{\nabla (|\nabla v|^2)}{|\nabla v|^4} \right ) \varphi^2  \d x \\
			& \quad \quad + \alpha \int_{\{v>0\} \cap \Omega} v^\alpha     \dfrac{\Delta v  }{v} \varphi^2  \d x \\
			&=   - \dfrac{\alpha}{2}\int_{\{v>0\} \cap \Omega} v^\alpha  (|\nabla v|^2 +1 )  \dfrac{\Delta v}{v |\nabla v|^2}   \varphi^2  \d x 
			+ \dfrac{\alpha}{4}\int_{\{v>0\} \cap \Omega} v^\alpha  \dfrac{|\nabla v|^2 +1}{v} \dfrac{\nabla v \cdot \nabla (|\nabla v|^2)}{|\nabla v|^4}  \varphi^2  \d x \\
			& \quad \quad - \dfrac{1}{2}\int_{\{v>0\} \cap \Omega} v^\alpha    \Delta v \dfrac{\nabla (|\nabla v|^2 )  \cdot \nabla v }{|\nabla v|^4} \varphi^2  \d x 
			+ \dfrac{1}{4}\int_{\{v>0\} \cap \Omega} v^\alpha  \dfrac{|\nabla (|\nabla v|^2)|^2}{|\nabla v|^4}  \varphi^2  \d x \\
			& \quad \quad + \alpha \int_{\{v>0\} \cap \Omega} v^\alpha     \dfrac{\Delta v  }{v} \varphi^2  \d x.
		\end{split}
	\end{equation}
	Note that since the free boundary is smooth, the factor $v^\alpha$ with $\alpha>0$ makes all boundary term vanish in the integration by parts.\footnote{\label{Footnote:BoundaryTerm}	
		This is a step where the case $\alpha>0$ is significantly different from $\alpha =  0$; see \Cref{Sec:AlphaToZero} for more details.
	} 
	Moreover, note that if $\alpha \in (0,1)$, to do the integration by parts correctly we should integrate first in $\{v > \delta\}$ for  $\delta >0$, where $v^\alpha$ is smooth, and then let $\delta \to 0$ to obtain the previous identity by dominated convergence.
	Combining the first and last terms in $I_{\mathrm{parts}}$, and also the second one with the third one, from the last expression we get		
	\begin{equation}
		\begin{split}		
			I_{\mathrm{parts}}  & = \alpha \int_{\{v>0\} \cap \Omega} v^\alpha     \dfrac{\Delta v  }{v |\nabla v|^2 }   \left( |\nabla v|^2 - \dfrac{1 + |\nabla v|^2}{2}\right)\varphi^2 \d x 
			+ \dfrac{1}{4}\int_{\{v>0\} \cap \Omega} v^\alpha  \dfrac{|\nabla (|\nabla v|^2)|^2}{|\nabla v|^4}  \varphi^2  \d x  \\ &\quad \quad + \dfrac{1}{2}\int_{\{v>0\} \cap \Omega} v^\alpha  \left( \dfrac{\alpha }{2} \dfrac{|\nabla v|^2 +1}{v}   - \Delta v \right) \dfrac{\nabla v \cdot \nabla (|\nabla v|^2)}{|\nabla v|^4}  \varphi^2  \d x  \\
			& =  - \int_{\{v>0\} \cap \Omega} v^\alpha  \dfrac{\Delta v  }{ |\nabla v|^2 }   \dfrac{ \alpha }{2}   \dfrac{1 -|\nabla v|^2}{v} \varphi^2 \d x 
			+ \dfrac{1}{4}\int_{\{v>0\} \cap \Omega} v^\alpha  \dfrac{|\nabla (|\nabla v|^2)|^2}{|\nabla v|^4}  \varphi^2  \d x  \\ &\quad \quad + \dfrac{1}{2}\int_{\{v>0\} \cap \Omega} v^\alpha  \left( \dfrac{\alpha }{2} \dfrac{|\nabla v|^2 +1}{v}   - \Delta v \right) \dfrac{\nabla v \cdot \nabla (|\nabla v|^2)}{|\nabla v|^4}  \varphi^2  \d x.
		\end{split}
	\end{equation}
	Now, using the equation for $v$ this gets simplified to 
	\begin{equation}
		\begin{split}
			I_{\mathrm{parts}}  
			& =  - \int_{\{v>0\} \cap \Omega} v^\alpha  \dfrac{(\Delta v)^2  }{|\nabla v|^2 }  \varphi^2 \d x 
			+ \dfrac{1}{4}\int_{\{v>0\} \cap \Omega} v^\alpha  \dfrac{|\nabla (|\nabla v|^2)|^2}{|\nabla v|^4}  \varphi^2  \d x  \\ &\quad \quad +  \dfrac{\alpha}{2}\int_{\{v>0\} \cap \Omega} v^\alpha  \dfrac{1}{v} \dfrac{\nabla v \cdot \nabla (|\nabla v|^2)}{|\nabla v|^2}  \varphi^2  \d x.
		\end{split}
	\end{equation}
	Plugging this into \eqref{Eq:2ndVarModifiedAltPhillipsPreInt} we get
	\begin{equation}
		\label{Eq:2ndVarModifiedAltPhillipsAfterInt}
		I_1 + I_2 + I_3  
		= \int_{\{v>0\} \cap \Omega} v^\alpha |\nabla \varphi |^2  \d x + \int_{\{v>0\} \cap \Omega} v^\alpha  \dfrac{\nabla v}{|\nabla v|^2} \cdot   \left( \dfrac{\alpha}{2}\dfrac{\nabla (|\nabla v|^2)}{v} + \nabla (\Delta v) \right) \varphi^2  \d x.
	\end{equation}
	Using again the equation, we see that in $\{v>0\} \cap \Omega$ it holds
	\begin{equation}
		\begin{split}
			\dfrac{\alpha}{2}\dfrac{\nabla (|\nabla v|^2)}{v} + \nabla (\Delta v) & = \dfrac{\alpha}{2} \left( \dfrac{\nabla (|\nabla v|^2)}{v} + \nabla \left [ \dfrac{1 - |\nabla v|^2}{v} \right ]  \right) = - \dfrac{\alpha}{2} \dfrac{1 - |\nabla v|^2}{v^2} \nabla v  .
		\end{split}
	\end{equation}
	Therefore,
	\begin{equation}
		I_1 + I_2 + I_3
		= \int_{\{v>0\} \cap \Omega} v^\alpha |\nabla \varphi |^2  \d x  
		- \dfrac{\alpha}{2} \int_{\{v>0\} \cap \Omega} v^\alpha \dfrac{1 - |\nabla v|^2}{v^2}  \varphi^2  \d x,
	\end{equation}
	and we obtain the stability condition \eqref{Eq:StabilityModifiedAltPhillips1} from the fact that $I_1 + I_2 + I_3 \geq 0$.

	\medskip
	$\bullet$ \textbf{Proof of \eqref{Eq:StabilityAP1}.}	
	To conclude the proof, let us show that the inequality \eqref{Eq:StabilityModifiedAltPhillips1} is equivalent to \eqref{Eq:StabilityAP1}.
	To do it, we undo the change \eqref{Eq:Defv}, that is, we use that $v = \beta u^{1/\beta}$.
	Indeed, we have that $v^\alpha = \beta^\alpha u^\gamma$ and thus the right-hand side of \eqref{Eq:StabilityModifiedAltPhillips1} can be written as
	\begin{equation}
		\int_{\{v>0\} \cap \Omega} v^\alpha |\nabla \varphi |^2  \d x
		= \beta^\alpha \int_{\{u>0\} \cap \Omega} u^\gamma |\nabla \varphi |^2  \d x ,
	\end{equation}
	while the left-hand side of \eqref{Eq:StabilityModifiedAltPhillips1} becomes
	\begin{equation}
			\dfrac{\alpha}{2} \int_{\{v>0\} \cap \Omega} v^\alpha \dfrac{1 - |\nabla v|^2}{v^2}  \varphi^2  \d x
			=
			\dfrac{\gamma \beta}{2}  \beta^\alpha \int_{\{u>0\} \cap \Omega}  u^\gamma \dfrac{1 - |\nabla v|^2}{v^2}\d x.
	\end{equation}
	Therefore, it only remains to show that
	\begin{equation}
		\beta \dfrac{1 - |\nabla v|^2}{v^2} = \dfrac{2 - \gamma}{2} \dfrac{u^\gamma - |\nabla u|^2}{u^2} \quad \text{ in } \{u>0\} \cap \Omega.
	\end{equation}
	To show this last identity, recall that from \eqref{Eq:RelationGradientsuv} we have $|\nabla v|^2 = |\nabla u|^2/u^\gamma$ in $\{u>0\} \cap \Omega$ and thus in this set we have
	\begin{equation}
		\beta \dfrac{1 - |\nabla v|^2}{v^2}
		= \beta \dfrac{1 - |\nabla u|^2/u^\gamma}{\beta^2 u^{2/\beta}}
		= \dfrac{1}{\beta} \dfrac{u^\gamma -  |\nabla u|^2}{u^\gamma u^{2-\gamma}} = \dfrac{2 - \gamma}{2} \dfrac{u^\gamma - |\nabla u|^2}{u^2}.
	\end{equation}
\end{proof}

\begin{remark}
	\label{Remark:ExpansionForu}
	One may think that the same computations could be carried out directly for the functional $E_\gamma^\mathrm{AP}[\cdot]$ instead of $E_\alpha^\mathrm{M}[\cdot]$.
	Indeed, our first approach to this problem was precisely that, using an expansion for $E_\gamma^\mathrm{AP}[\cdot]$ taking later a vector field of the form $\nabla u \varphi/|\nabla u|^2$ (since this is the choice that works when $\gamma = 0$, see \Cref{Sec:AlphaToZero}).
	Nevertheless, several difficulties appear in this case, since  $|\nabla u|$ vanishes on the free boundary and thus this choice is not admissible.
	Even trying with a vector field of the form $\nabla u \varphi/|\nabla u|$ gives problems, since the computations lead to singular free boundary terms in the integration by parts.
	Once we see how the previous proof works, this tells us that the right choice, if one wants to carry out the computations directly for $E_\gamma^\mathrm{AP}[\cdot]$, is taking a vector field of the form  $\nabla u \varphi u^{\gamma/2}/|\nabla u|^2$.
\end{remark}

\section{Recovering the stability condition of the Alt-Caffarelli problem}
\label{Sec:AlphaToZero}

In this section, we briefly comment on the relation of \Cref{Th:StabilityCondition} with the stability inequality for the Alt-Caffarelli  problem \eqref{Eq:StabilityOnePhase-Intro}, which would correspond to the case $\gamma = 0$ (i.e., $\alpha =0$).
The key fact that we will use is the following lemma: 

\begin{lemma}
	\label{Lemma:MeandCurvatureFB}
	Let $\alpha \geq 0$ and $v\in C^\infty(\overline{\{v>0\}}\cap \Omega)$ be a function satisfying \eqref{Eq:ModAltPhillipsEquation} and \eqref{Eq:ModAltPhillipsFBC}, and such that $\partial \{v>0\}\cap \Omega$ is smooth.
	Then, for every $x_0\in \partial \{v>0\}\cap \Omega$,
	\begin{equation}
		\label{Eq:LaplacianFreeBoundary}
		\lim_{x \to x_0}\Delta v(x) = - \alpha v_{\nu \nu}(x_0)
	\end{equation}
	whenever $x \to x_0$ from the positivity set $\{v>0\}$.
	In particular, the mean curvature\footnote{See \Cref{Footnote:MeanCurvature} for our orientation convention.} of the free boundary at regular points can be expressed as
	\begin{equation}
		\label{Eq:MeanCurvatureFreeBoundaryvnunu}
		H = - (1 + \alpha) v_{\nu\nu}.
	\end{equation}
\end{lemma}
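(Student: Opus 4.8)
The plan is to work in a neighborhood of a fixed regular point $x_0 \in \partial\{v>0\}\cap\Omega$, using Fermi (boundary normal) coordinates adapted to the free boundary. First I would set up coordinates $(y', d)$ where $y'$ parametrizes the hypersurface $\partial\{v>0\}$ near $x_0$ and $d = \dfb$ is the signed distance, positive on the positivity side; recall that $\nu = -\nabla v/|\nabla v|$ is the outward normal to $\{v>0\}$, so $\nabla d = -\nu$ on the free boundary. In these coordinates the Laplacian of any smooth function $w$ splits as $\Delta w = w_{dd} + (\Delta d)\, w_d + \Delta_{\partial\{v>0\}} w + (\text{lower order in } d)$, where at points of the free boundary $\Delta d = -(n-1)H_{\mathrm{geo}}$ with $H_{\mathrm{geo}}$ the mean curvature. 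Since the paper's convention (Footnote \ref{Footnote:MeanCurvature}) is that $H$ is the mean curvature of $\partial\{u=0\}=\partial\{v=0\}$ oriented along $\nabla v/|\nabla v| = -\nu$, I would be careful to track that this is $H = -(\Delta d)|_{\partial\{v>0\}}$ with the right sign convention; this bookkeeping is where one must be cautious.

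The key input is that, by the regularity results quoted in Section \ref{Sec:PreliminaryResults}, $v/\dfb$ extends smoothly up to the free boundary near regular points and $|\nabla v|^2 = 1$ there by \eqref{Eq:ModAltPhillipsFBC}; hence $v = d + O(d^2)$, i.e. $v_d \to 1$, $v_{y'} \to 0$ as $d\to 0$, and $v(y',0)=0$. Then I would expand the equation \eqref{Eq:ModAltPhillipsEquation}, namely $\Delta v = \frac{\alpha}{2}\frac{1-|\nabla v|^2}{v}$, as $d\to 0$. On the left, using the coordinate expansion of $\Delta$ and $v(y',0)=0$, $\nabla v \to -\nu$, one gets $\Delta v \to v_{dd}(y',0) + (\Delta d)|_{\partial\{v>0\}} = v_{\nu\nu} - H$ — where I identify $v_{dd}(y',0) = v_{\nu\nu}(x_0)$ since differentiating twice along $d$ is differentiating twice in the $-\nu$ direction, and the sign squares away. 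On the right, I would Taylor-expand the numerator: write $|\nabla v|^2 = |\nabla v|^2(y',d)$, note it equals $1$ at $d=0$, so $1 - |\nabla v|^2 = -\partial_d(|\nabla v|^2)|_{d=0}\, d + O(d^2)$, and $\partial_d(|\nabla v|^2) = 2\nabla v \cdot \nabla v_d \to 2(-\nu)\cdot v_{dd}(-\nu)\cdots$ — more directly $\partial_d(|\nabla v|^2)|_{d=0} = 2 v_d v_{dd}|_{d=0} = 2 v_{\nu\nu}$ (using $v_d\to 1$ and that the tangential contributions vanish to this order). Dividing by $v = d + O(d^2)$ gives $\frac{1-|\nabla v|^2}{v} \to -2 v_{\nu\nu}$, so the right-hand side tends to $\frac{\alpha}{2}(-2 v_{\nu\nu}) = -\alpha v_{\nu\nu}$.

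Equating the two limits yields $v_{\nu\nu} - H = -\alpha v_{\nu\nu}$, i.e. $H = (1+\alpha) v_{\nu\nu}$; combined with the orientation convention this is exactly \eqref{Eq:MeanCurvatureFreeBoundaryvnunu} up to the sign I flagged, and \eqref{Eq:LaplacianFreeBoundary} is the left-hand side computation $\lim \Delta v = -\alpha v_{\nu\nu}$ read off from the right-hand side. The main obstacle I expect is the careful justification that all the tangential and higher-order-in-$d$ terms in the expansion of $\Delta v$ and of $|\nabla v|^2$ genuinely vanish in the limit — this relies on the smoothness of $v/\dfb$ up to $\partial\{v>0\}$ from \cite{RestrepoRosOton} and on choosing the Fermi coordinates so that the metric is Euclidean to first order at the free boundary — together with pinning down the sign conventions for $H$, $\nu$, and $v_{\nu\nu}$ consistently so that the final formula has the stated form. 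An alternative, coordinate-free route is to differentiate the identity $|\nabla v|^2 = 1$ along the free boundary and use $\nabla_\nu(|\nabla v|^2) = 2 v_{\nu\nu} v_\nu = -2 v_{\nu\nu}$ together with a Taylor expansion of $1-|\nabla v|^2$ in the normal direction; I would present whichever is cleaner, but both reduce to the same two-term expansion.
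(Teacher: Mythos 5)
Your proposal is correct and follows essentially the same route as the paper: both arguments rest on the two-term Taylor expansions $v(x_0-t\nu)=t+O(t^2)$ and $|\nabla v(x_0-t\nu)|^2=1+2t\,v_{\nu\nu}(x_0)+O(t^2)$ along the inward normal ray (the latter from $\partial_\nu|\nabla v|^2=-2|\nabla v|v_{\nu\nu}$ and the free boundary condition $|\nabla v|=1$), which upon insertion into \eqref{Eq:ModAltPhillipsEquation} give $\Delta v\to-\alpha v_{\nu\nu}$. The only difference is that the paper dispenses with Fermi coordinates and instead reads off the mean curvature from the level-set formula $H=\big(\Delta v-v_{\nu\nu}\big)/|\nabla v|$ evaluated at $x_0-t\nu$; this also settles the sign you left open, yielding $H=-(1+\alpha)v_{\nu\nu}$ under the paper's orientation convention, so your $\Delta d$ bookkeeping carries an extra minus sign that you would need to fix.
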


\begin{proof}
	Given $x_0\in \partial \{v>0\}\cap \Omega$ a regular free boundary point, for $t >0$ small enough we consider the points $x_0 -t\nu(x_0) \in \{v>0\}$.
	For simplicity we will write $\nu = \nu(x_0)$.
	Using the PDE \eqref{Eq:ModAltPhillipsEquation} we get
	\begin{equation}
		\Delta v(x_0-t\nu) = \dfrac{\alpha}{2} (1 - |\nabla v(x_0 - t\nu)|^2) \dfrac{1}{v(x_0 - t\nu)}.
	\end{equation}
	We will expand the previous expression in terms of $t$ and later take the limit $t\downarrow0$.

	On the one hand, using that $v(x_0)=0$, we know that
	\begin{equation}
		v(x_0 - t\nu) = v(x_0) - t v_\nu (x_0) + O(t^2) = - t v_\nu (x_0) + O(t^2),
	\end{equation}
	and since $|\nabla v| = 1$ along the free boundary, it follows that $v_\nu (x_0) =-1$, which yields
	\begin{equation}
		\label{Eq:vExpNearFB1}
		v(x_0 - t\nu) = t  + O(t^2).
	\end{equation}
	On the other hand, since for $i=1,\ldots,n$ we have
	\begin{equation}
		v_i(x_0-t\nu) = v_i(x_0) - t v_{i \nu} (x_0) + O(t^2),
	\end{equation}
	it follows that
	\begin{equation}
		\label{Eq:GradSquaredExpNearFB1}
		\begin{split}
			|\nabla v(x_0 - t\nu)|^2 &= \sum_{i=1}^n v_i(x_0-t\nu)^2 = \sum_{i=1}^n  v_i(x_0)^2 - 2 t \sum_{i=1}^n  v_i(x_0) v_{i \nu} (x_0) + O(t^2)\\
			& = |\nabla v(x_0)|^2 - t \partial_\nu |\nabla v(x_0)|^2 + O(t^2),
		\end{split}
	\end{equation}
	Note that if we choose coordinates at $x_0$ is such a way that $e_n = \nu = -\nabla v/ |\nabla v|$, then $\nabla v (x_0) = -|\nabla v (x_0)|e_n$ and thus
	\begin{equation}
		v_i (x_0) = 0 \quad \text{ for } i=1,\ldots,n-1
	\end{equation}
	and 
	\begin{equation}
		v_n (x_0) = e_n \cdot \nabla v(x_0) = -\nabla v(x_0) \cdot \nabla v(x_0)/|\nabla v (x_0)| = -|\nabla v (x_0)|.
	\end{equation}
	As a consequence, since $v_\nu (x_0) = v_n(x_0)$ in these coordinates,
	\begin{equation}
		\partial_\nu |\nabla v(x_0)|^2 
		= 2\sum_{i=1}^n  v_i(x_0) v_{i \nu} (x_0) 
		= 2 v_n(x_0) v_{n \nu} (x_0) 
		= -2 |\nabla v (x_0) | v_{\nu \nu} (x_0).
	\end{equation}
	Using this in \eqref{Eq:GradSquaredExpNearFB1} and recalling that $|\nabla v(x_0)|=1$, we obtain
	\begin{equation}
		\label{Eq:GradSquaredExpNearFB2}
		|\nabla v(x_0 - t\nu)|^2 = 1 + 2 t v_{\nu \nu} (x_0) + O(t^2).
	\end{equation}
	Combining this expression with \eqref{Eq:vExpNearFB1} we get
	\begin{equation}
		\label{Eq:LaplacianExpNearFB}
		\Delta v(x_0-t\nu) 
		= - \dfrac{\alpha}{2}  \, \dfrac{2 t v_{\nu \nu} (x_0) + O(t^2)}{ t  + O(t^2)} 
		= - \dfrac{\alpha}{2}  \, \dfrac{2  v_{\nu \nu} (x_0) + O(t)}{1 + O(t)},
	\end{equation}
	which gives 
	\begin{equation}
		\label{Eq:LaplacianFreeBoundaryProof}
		\lim_{t\downarrow0}\Delta v(x_0-t\nu) = - \alpha v_{\nu \nu} (x_0).
	\end{equation}

	Finally, at points $x_0 -t\nu$, the mean curvature of the level set of $v$ passing through $x_0 -t\nu$ is given by the expression\footnote{For a smooth function $w$, the mean curvature of the boundary of a sublevel set $\{w \leq k\}$ is usually defined as 
		\begin{equation}
			H := \div\left(\dfrac{\nabla w}{|\nabla w|}\right) = \dfrac{1}{|\nabla w|} \left(\Delta w - w_{\nu \nu} \right),
		\end{equation}
		where $w_{\nu \nu}= w_i w_j w_{ij} |\nabla w|^{-2}$ is the second derivative of $w$ in the direction of $\nu = \nabla w/|\nabla w|$.}
	\begin{equation}
		H(x_0 - t\nu) =  \dfrac{\Delta v(x_0 - t\nu) - v_{\nu \nu}(x_0 - t\nu)}{|\nabla v(x_0 - t\nu)|}. 
	\end{equation}
	Since $|\nabla v(x_0)|=1$, the result follows from \eqref{Eq:LaplacianFreeBoundaryProof}.
\end{proof}

Taking the previous lemma into account, one realizes that the proof of \Cref{Th:StabilityCondition} can be carried out as well in the case $\alpha = 0$, thus obtaining an alternative proof of \eqref{Eq:StabilityOnePhase-Intro} which uses only elementary computations and integration by parts.
Indeed, one can follow the whole proof with $\alpha = 0$ and taking into account that in this case $\Delta v = 0$ in $\{v>0\}\cap \Omega$.
Then, the term $I_{\mathrm{parts}}$ defined in \eqref{Eq:Iparts} is simply
\begin{equation}
	I_{\mathrm{parts}} = 
	- \dfrac{1}{4}\int_{\{v>0\} \cap \Omega} (|\nabla v|^2 +1 ) \div \left[ \left( \dfrac{\nabla (|\nabla v|^2)}{|\nabla v|^4} \right ) \varphi^2 \right] \d x 
\end{equation}
Using that on $\partial \{v>0\} \cap \Omega$ we have $|\nabla v|= 1$ and $\nu = - \nabla v$,  and taking into account the fact that $\partial_i (|\nabla v|^2) = 2 v_j v_{ji}$ (and thus $\nabla (|\nabla v|^2) \cdot \nu = -2 \partial_i (|\nabla v|^2) v_i = -2 v_j v_{ji} v_i = -2 v_{\nu \nu}$), it follows readily that 
\begin{equation}
	I_{\mathrm{parts}}  =
	\int_{\partial \{v>0\} \cap \Omega} v_{\nu \nu } \varphi^2 \d \sigma	
	+ \dfrac{1}{4}\int_{\{v>0\} \cap \Omega}  \dfrac{|\nabla (|\nabla v|^2)|^2}{|\nabla v|^4}  \varphi^2  \d x
\end{equation}
and putting this in \eqref{Eq:2ndVarModifiedAltPhillipsPreInt} and taking into account \Cref{Lemma:MeandCurvatureFB} we readily obtain \eqref{Eq:StabilityOnePhase-Intro}.

A priori, the regimes $\alpha=0$ and $\alpha>0$ seem quite different, at least in terms of the stability inequality.
However, it can be seen that \eqref{Eq:StabilityModifiedAltPhillips1} converges, as $\alpha \to 0$, to the stability inequality for the one phase problem \eqref{Eq:StabilityOnePhase-Intro}.
This link can be made more clear if we write  \eqref{Eq:StabilityModifiedAltPhillips1} as
\begin{equation}
	\label{Eq:StabilityModifiedAltPhillips2}
	\int_{\{v>0\} \cap \Omega} v^{\alpha - 1} \Delta v \varphi^2  \d x \leq \int_{\{v>0\} \cap \Omega} v^\alpha |\nabla \varphi |^2  \d x,
\end{equation}
or
\begin{equation}
	\label{Eq:StabilityModifiedAltPhillips3}
	\alpha \int_{\{v>0\} \cap \Omega} v^{\alpha-1} \dfrac{1 - |\nabla v|^2}{2 v}  \varphi^2  \d x \leq \int_{\{v>0\} \cap \Omega} v^\alpha |\nabla \varphi |^2  \d x.
\end{equation}
On the one hand, it is easy to see that $\alpha v^{\alpha - 1}$ converges in the sense of distributions (for $v$ fixed) to the delta distribution on the free boundary as $\alpha \to 0$.
Indeed, for any $\phi\in C^\infty_c (\Omega)$, using integration by parts we have
\begin{equation}
	\alpha \int_{\{v>0\} \cap \Omega} v^{\alpha-1} \phi \d x 
	=  \int_{\{v>0\} \cap \Omega} \nabla(v^\alpha) \cdot \dfrac{\nabla v }{|\nabla v|^2} \phi \d x  
	= - \int_{\{v>0\} \cap \Omega} v^\alpha \div \left ( \dfrac{\nabla v }{|\nabla v|^2}\phi \right)  \d x.
\end{equation}
Letting $\alpha \to 0$, in the right-hand side we obtain (by the divergence theorem using that $\nu = - \nabla v/|\nabla v|$ and that $|\nabla v |= 1$ on $\partial \{v>0\} \cap \Omega$)
\begin{equation}
	- \int_{\{v>0\} \cap \Omega}\div \left ( \dfrac{\nabla v }{|\nabla v|^2}\phi \right)  \d x = - \int_{\partial \{v>0\} \cap \Omega}  \dfrac{\nabla v }{|\nabla v|^2} \cdot \nu \phi   \d \sigma  = \int_{\partial \{v>0\} \cap \Omega}   \phi   \d \sigma .
\end{equation}
On the other hand, using \Cref{Lemma:MeandCurvatureFB} we can see that, given $x_0\in \partial \{v>0\}\cap \Omega$ a regular free boundary point, then
\begin{equation}
	\dfrac{1 - |\nabla v(x)|^2}{2 v(x)}  = \dfrac{\Delta v(x)}{\alpha} \to \dfrac{H(x_0)}{1 + \alpha } \quad \text{ as } x \to x_0\in \partial \{v>0\}\cap \Omega,
\end{equation}
where $H(x_0)$ is the mean curvature of the boundary of $\{v=0\}$ oriented towards $-\nabla v$.
As a consequence of the previous two facts, if we have a sequence $\{\alpha_k\}$ with $\alpha_k \downarrow 0$ as $k\to +\infty$, and for each $\alpha_k$ we have an associated stable critical point $v_k$ of $E_{\alpha_k}^\mathrm{M} [\cdot]$, if $v_k \to v_\star$ in a suitable sense, then we would obtain the stability inequality \eqref{Eq:StabilityOnePhase-Intro} for $v_\star$ as $k\to+\infty$.

\section{Classification of axially symmetric solutions}
\label{Sec:AxSymSol}

In this section, we establish our classification result on axially symmetric solutions, stated in \Cref{Th:AxiallySymm}.
To be more precise, if we denote points in $\R^n$ by  $x = ( x', x_n)$, with $x'\in \R^{n-1}$, we say that $u$ is axially symmetric if (up to a rotation), it depends only on $|x'|$ and $x_n$, and we will call $\tau := |x'|$.
In particular, we do not require $u$ to be even with respect to $x_n$, and this includes in our setting the cases where $\{u=0\}$ is a one-sided or a two-sided cone with vertex at the origin. 
These cases will be ruled out (except for the half-space case) if $u$ is stable thanks to \Cref{Th:AxiallySymm}, but until such result is proved we will not assume that $\partial \{u>0\}$ is smooth at the origin.

Note that if we write the equation for $u$ in the $(\tau, x_n)$ coordinates we have
\begin{equation}
	\Delta u = u_{\tau \tau}  + \dfrac{n-2}{\tau} u_\tau +  u_{n n} = \dfrac{\gamma}{2} u^{\gamma - 1} \quad \text{ in } \{u>0\} \cap \{\tau > 0 \} \cap\Omega,
\end{equation}
where $u_\tau$ and $u_{\tau \tau}$ denote the first and second derivatives of $u$ with respect to $\tau$.
Differentiating the above equation with respect to $\tau$, we get 
\begin{equation}
	\label{Eq:Equtau}
	\Delta (u_\tau) = \dfrac{n-2}{\tau^2} u_\tau + \dfrac{\gamma (\gamma - 1)}{2} u^{\gamma - 2} u_\tau \quad \text{ in } \{u>0\}\cap \{\tau > 0 \} \cap \Omega,
\end{equation}
which will be used in this section.

To establish \Cref{Th:AxiallySymm}, the crucial point is to obtain the following result:

\begin{proposition}
	\label{Prop:Stabilityutau}
	Let $n\geq 3$ and let $u\geq 0$ be a stable critical point of $E_\gamma^\mathrm{AP} [\cdot]$ that is axially symmetric. 
	Assume that $0 \in \partial\{u>0\}\cap \Omega$, that $\partial\{u>0\}\cap \Omega$ is smooth away from $0$, and that $v= \beta u^{1/\beta}$ is $C^\infty$ up to the free boundary away from $0$.
	
	Then, 
	\begin{equation}
		\label{Eq:Stabilityutau}
		(n-2) \int_{\{u>0\} \cap \Omega}  \dfrac{u_\tau^2}{\tau^2} \eta^2 \d x 
		\leq \int_{\{u>0\} \cap \Omega} u_\tau^2  |\nabla \eta |^2  \d x
	\end{equation}
	or, equivalently,
	\begin{equation}
		\label{Eq:StabilityutauV}
		(n-2) \int_{\{v>0\} \cap \Omega} v^\alpha \dfrac{v_\tau^2}{\tau^2} \eta^2 \d x 
		\leq \int_{\{v>0\} \cap \Omega} v^\alpha v_\tau^2  |\nabla \eta |^2  \d x,
	\end{equation}
	for every $\eta  \in C^1_c(\Omega)$.
\end{proposition}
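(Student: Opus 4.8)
The plan is to test the stability inequality \eqref{Eq:StabilityModifiedAltPhillips1} of \Cref{Th:StabilityCondition} --- more conveniently, its equivalent form \eqref{Eq:StabilityModifiedAltPhillips2}, that is $\int_{\{v>0\}\cap\Omega} v^{\alpha-1}\Delta v\,\varphi^2\d x\le\int_{\{v>0\}\cap\Omega}v^\alpha|\nabla\varphi|^2\d x$ --- with the choice $\varphi=v_\tau\eta$, and then to simplify the outcome using the equation \eqref{Eq:ModAltPhillipsEquation} for $v$ together with the linearized equation for $v_\tau$. Since $v=\beta u^{1/\beta}$ gives $v_\tau=u^{-\gamma/2}u_\tau$ by \eqref{Eq:RelationGradientsuv}, we have $v^\alpha v_\tau^2=\beta^\alpha u_\tau^2$ and $v^\alpha v_\tau^2/\tau^2=\beta^\alpha u_\tau^2/\tau^2$, so \eqref{Eq:Stabilityutau} and \eqref{Eq:StabilityutauV} are equivalent and it suffices to prove \eqref{Eq:StabilityutauV}. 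Note also that $|v_\tau|\le|\nabla v|$ everywhere, since $|\nabla v|^2=v_\tau^2+v_{x_n}^2$ for an axially symmetric function.

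I would first prove \eqref{Eq:StabilityutauV} for $\eta\in C^1_c(\Omega\setminus\{0\})$. On $\supp\eta$ the free boundary is smooth and $v\in C^\infty$ up to it by hypothesis, so $v_\tau\eta$ is Lipschitz with compact support --- it is smooth except along the symmetry axis $\{\tau=0\}$, where $v_\tau$ behaves like $|x'|$ and is merely Lipschitz. Approximating $v_\tau\eta$ by $C^1_c$ functions and invoking \Cref{Remark:AssumptionsStabilityCondition} to dispense with the hypothesis $|\nabla v|>0$ (legitimate precisely because $|v_\tau\eta|\le|\nabla v|\,|\eta|$), the inequality \eqref{Eq:StabilityModifiedAltPhillips2} applies with $\varphi=v_\tau\eta$. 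Expanding $|\nabla(v_\tau\eta)|^2$, the cross term equals $\tfrac12 v^\alpha\nabla(v_\tau^2)\cdot\nabla(\eta^2)$, which I integrate by parts; there is no free-boundary contribution because $\alpha>0$ forces $v^\alpha=0$ there, and using $\Delta(v_\tau^2)=2|\nabla v_\tau|^2+2v_\tau\Delta v_\tau$ the terms $v^\alpha|\nabla v_\tau|^2\eta^2$ cancel. The left-hand side of \eqref{Eq:StabilityModifiedAltPhillips2} with this $\varphi$ is $\int v^{\alpha-1}\Delta v\,v_\tau^2\eta^2\d x$. Now I insert the equation for $v_\tau$, obtained by differentiating \eqref{Eq:ModAltPhillipsEquation} in $\tau$ exactly as \eqref{Eq:Equtau} is obtained from \eqref{Eq:AltPhillipsEquation}, namely $\Delta(v_\tau)=\partial_\tau(\Delta v)+\tfrac{n-2}{\tau^2}v_\tau=-\tfrac{\alpha}{2}\tfrac{\partial_\tau(|\nabla v|^2)}{v}-\tfrac{\Delta v}{v}v_\tau+\tfrac{n-2}{\tau^2}v_\tau$ in $\{v>0\}\cap\{\tau>0\}\cap\Omega$. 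After this substitution everything telescopes: the two copies of $\int v^{\alpha-1}\Delta v\,v_\tau^2\eta^2$ cancel, and the two remaining terms $\tfrac{\alpha}{2}\int v^{\alpha-1}\nabla v\cdot\nabla(v_\tau^2)\eta^2$ and $-\tfrac{\alpha}{2}\int v^{\alpha-1}v_\tau\,\partial_\tau(|\nabla v|^2)\eta^2$ cancel each other, because for an axially symmetric function $\nabla v\cdot\nabla(v_\tau^2)=2v_\tau\,\nabla v\cdot\nabla v_\tau=v_\tau\,\partial_\tau(|\nabla v|^2)$. What survives is exactly $(n-2)\int v^\alpha\tfrac{v_\tau^2}{\tau^2}\eta^2\le\int v^\alpha v_\tau^2|\nabla\eta|^2$.

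To pass to a general $\eta\in C^1_c(\Omega)$ I would insert a cutoff $\zeta_\varepsilon$ that vanishes on $B_\varepsilon(0)$, equals $1$ outside $B_{2\varepsilon}(0)$, and satisfies $|\nabla\zeta_\varepsilon|\le C/\varepsilon$, and apply the previous step to $\eta\zeta_\varepsilon\in C^1_c(\Omega\setminus\{0\})$. As $\varepsilon\to0$ the left-hand side increases monotonically to $(n-2)\int v^\alpha v_\tau^2\tau^{-2}\eta^2$, while on the right $\int v^\alpha v_\tau^2\zeta_\varepsilon^2|\nabla\eta|^2\to\int v^\alpha v_\tau^2|\nabla\eta|^2$, and the error terms carrying $\nabla\zeta_\varepsilon$ are supported in $B_{2\varepsilon}(0)$, where $v^\alpha v_\tau^2=\beta^\alpha u_\tau^2\le\beta^\alpha|\nabla u|^2$ is bounded by the optimal $C^{1,\beta-1}$ regularity of $u$; hence they are $O(\varepsilon^{-2}|B_{2\varepsilon}|)+O(\varepsilon^{-1}|B_{2\varepsilon}|)=O(\varepsilon^{n-2})\to0$ since $n\ge3$. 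This gives \eqref{Eq:StabilityutauV}, and therefore \eqref{Eq:Stabilityutau}, for every $\eta\in C^1_c(\Omega)$.

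The main obstacle is not the algebra --- which, as sketched, collapses neatly once one uses both \eqref{Eq:ModAltPhillipsEquation} and the linearized equation for $v_\tau$ --- but the two approximation arguments hidden behind it. First one must justify testing the stability inequality with $\varphi=v_\tau\eta$, which is only Lipschitz along the axis and whose interaction with the free boundary must be controlled: here the hypotheses that $\partial\{u>0\}$ is smooth away from $0$ and $v\in C^\infty$ up to the free boundary away from $0$, together with the vanishing of the free-boundary term forced by $\alpha>0$ (compare \Cref{Remark:IntegrabilityStabilityCondition}), are what make the integration by parts admissible, and this is where the ``fine control of $u$ and its derivatives near $\partial\{u>0\}$'' enters. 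Second, and more delicate, is the removal of the cutoff around the possibly singular vertex $0\in\partial\{u>0\}$: this is the step that needs $n\ge3$, exactly as in the classification of stable minimal cones and in the Alt--Caffarelli problem.
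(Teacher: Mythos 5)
Your proposal is, in substance, the paper's own first proof of \Cref{Prop:Stabilityutau} (the one ``using $v$''): you test \eqref{Eq:StabilityModifiedAltPhillips1}--\eqref{Eq:StabilityModifiedAltPhillips2} with $\varphi=v_\tau\eta$, kill the free-boundary term in the integration by parts via the factor $v^\alpha$ with $\alpha>0$, and make everything collapse through the linearized equation for $v_\tau$; your algebra (including the identity $\nabla v\cdot\nabla(v_\tau^2)=v_\tau\,\partial_\tau(|\nabla v|^2)$ and the cancellation of the two $v^{\alpha-1}\Delta v\,v_\tau^2$ terms) is correct and is exactly the content of the paper's identity \eqref{Eq:LaplacianCv}. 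The only genuinely different detail is that you verify that identity by differentiating \eqref{Eq:ModAltPhillipsEquation} in $\tau$ directly, whereas the paper passes through the relation $u_\tau=(\beta v)^{\alpha/2}v_\tau$ and the equation \eqref{Eq:Equtau} for $u_\tau$; your route is slightly shorter and equally valid. Your removal of the origin cutoff is the same as the paper's second cutoff step.

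The one place where your write-up is weaker than the paper is the symmetry axis $\{\tau=0\}$. Approximating $v_\tau\eta$ by $C^1_c$ functions only legitimizes plugging $\varphi=v_\tau\eta$ into the stability inequality; it does not by itself justify the subsequent manipulations when $\eta$ does not vanish near the axis, since there $v_\tau$ is merely Lipschitz, the second integration by parts and the pointwise identity $\Delta(v_\tau^2)=2|\nabla v_\tau|^2+2v_\tau\Delta v_\tau$ are only classical on $\{\tau>0\}$, and the equation you substitute carries the singular coefficient $(n-2)/\tau^2$. One must check that no flux appears through $\{\tau=\varepsilon\}$ as $\varepsilon\to0$ (equivalently, that the distributional divergence of $v^\alpha\nabla(v_\tau^2)$ has no singular part on the axis). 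This is precisely what the paper's first cutoff $\zeta(\tau/\varepsilon)$ does, using $n\ge 3$ and $|v_\tau|\lesssim\tau$ near the axis, before the radial cutoff at the origin; the fix is the same device you already use at $0$, so the gap is easily repaired, but as written your Step 1 (validity for all $\eta\in C^1_c(\Omega\setminus\{0\})$, not merely $\eta$ supported in $\{\tau>0\}$) is not fully justified.
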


The main idea to establish \eqref{Eq:Stabilityutau} is to take, in the stability condition, a test function of the form $\varphi = c \eta$, where $\eta$ has compact support and $c$ does not, but satisfies an appropriated linearized equation.
For semilinear equations $-\Delta u = f(u)$, the choice $\varphi = c \eta$ leads, after integration by parts in the stability condition \eqref{Eq:StabilitySemilinear}, to a left-hand side with the term $c(\Delta +f'(u))c$, and then one takes $c= u_\tau$ since $(\Delta + f'(u))u_\tau = (n-2) \tau^{-2} u_\tau$, leading to an inequality analogous to~\eqref{Eq:Stabilityutau}; see~\cite{FernandezRealRosOton2019global} for more details.
In view of the linearized equation \eqref{Eq:Equtau} we would like to choose as well $c= u_\tau$.
However, this choice does not work since in the stability inequality~\eqref{Eq:StabilityAP1} the linearized operator $\Delta - (\gamma/2) (\gamma-1) u^{\gamma - 2}$ does not explicitly appear. 
Indeed, the key point to prove \Cref{Prop:Stabilityutau} is realizing that the right choice of test function consists of taking $c= u^{-\gamma/2} u_\tau$. 
As we will see below in the proof, if we first take $\varphi= u^{-\gamma/2} \xi$ for some test function $\xi$ yet to choose, at least for $\gamma>2/3$ we get
\begin{equation}
	- \dfrac{\gamma (\gamma-1)}{2}   
	\int_{\{u>0\} \cap \Omega}  u^{\gamma - 2} \xi^2  \d x  \leq  \int_{\{u>0\} \cap \Omega} |\nabla \xi|^2 \d x,
\end{equation}
which resembles very much to the stability inequality \eqref{Eq:StabilitySemilinear} for semilinear equations.
From this, we then take $\xi = u_\tau \eta$ obtaining \eqref{Eq:Stabilityutau} for $\gamma > 2/3$.
However, as will be explained in more detail in the proof, this argument does not directly work in the range $\gamma \in (0,2/3)$ due to the presence of two singular free boundary terms appearing after using integration by parts.
To overcome this, it is necessary to do a more careful analysis, using the precise behavior of $u$ and its derivatives up to the free boundary to observe a crucial cancellation.

From the previous reasoning, one realizes that our choice of test function is actually $\varphi = c \eta$ with $c = v_\tau$, where $v = \beta u^{1/\beta}$.
This leads to an alternative way of establishing \Cref{Prop:Stabilityutau}, using the function $v$ and the stability inequality \eqref{Eq:StabilityModifiedAltPhillips1} in a slightly more direct way.
This is how we will present first the proof of \Cref{Prop:Stabilityutau}, establishing \eqref{Eq:StabilityutauV}, and later we will show how to obtain \eqref{Eq:Stabilityutau} from the stability inequality \eqref{Eq:StabilityAP1} as explained in the previous paragraph.
Even if one can really go from \eqref{Eq:Stabilityutau} to \eqref{Eq:StabilityutauV} and back simply using the relation between $u$ and $v$, we think that it is more insightful to show both proofs.

\begin{proof}[Proof of \Cref{Prop:Stabilityutau} using $v$]
	Since $\partial\{u>0\}\cap \Omega$ is smooth away from $0\in \partial\{u>0\}$, the stability condition \eqref{Eq:StabilityModifiedAltPhillips1} holds for any test function $\varphi\in C^1_c(\Omega \setminus  \{0\})$.
	We start by choosing $\varphi$ in \eqref{Eq:StabilityModifiedAltPhillips1} of the form
	\begin{equation}
		\varphi = c \eta,
	\end{equation}
	where $\eta$ is as in the statement of the result, but with support in $\{\tau>0\}$, and where $c$ is a smooth function in $\overline{\{v>0\}}\cap \Omega \cap \{\tau >0\}$ not necessarily with compact support and with $|c|\leq |\nabla v|$.
	In view of \Cref{Remark:AssumptionsStabilityCondition}, this last assumption allows us  to remove the nonvanishing assumption on $|\nabla v|$ in \Cref{Th:StabilityCondition}.
	Note that the free boundary $\partial \{v>0\} \cap \Omega$ is smooth in the support of $\eta$, and thus we have that $v$ is $C^\infty$ up to the free boundary in the support of $\eta$.
	
	With this choice of test function in \eqref{Eq:StabilityModifiedAltPhillips1}, we compute the right-hand side of the resulting stability inequality using integration by parts. 
	We get	
	\begin{equation}
		\begin{split}
			\int_{\{v>0\} \cap \Omega}  v^\alpha |\nabla \varphi|^2 \d x 
			&= \int_{\{v>0\} \cap \Omega}  v^\alpha |\nabla c|^2 \eta^2  \d x + \int_{\{v>0\} \cap \Omega}  v^\alpha c^2 |\nabla \eta|^2  \d x \\
			&\quad \quad + \int_{\{v>0\} \cap \Omega}  v^\alpha (c \nabla c) \cdot  \nabla \eta^2 \d x \\
			& = \int_{\{v>0\} \cap \Omega}  v^\alpha c^2 |\nabla \eta|^2  \d x \\
			&\quad \quad -  \alpha \int_{\{v>0\} \cap \Omega}  v^{\alpha-1} c (\nabla c \cdot  \nabla v ) \eta^2 \d x  -  \int_{\{v>0\} \cap \Omega}  v^\alpha c \Delta c \eta^2 \d x .
		\end{split}
	\end{equation}
	Note that here no boundary term appears since $c$ is smooth up to the free boundary and then $v^\alpha c \nabla c$ vanishes on $\partial \{v>0\} \cap \Omega$.
	As a consequence, we have
	\begin{equation}
		\label{Eq:StabilityvCETA}
		\int_{\{v>0\} \cap \Omega}  v^\alpha \left ( \Delta c + \dfrac{\alpha}{2} \dfrac{ 1 - |\nabla v|^2 }{v^2} c + \alpha\dfrac{1}{v}\nabla c \cdot  \nabla v  \right) c \eta^2 \d x
		\leq \int_{\{v>0\} \cap \Omega}  v^\alpha c^2 |\nabla \eta|^2  \d x.
	\end{equation}

	To obtain \eqref{Eq:StabilityutauV}, we take $c = v_\tau$ (which satisfies the hypothesis mentioned above), and it is enough to check that in $\{v>0\}\cap \{\tau > 0 \} \cap \Omega$ it holds
	\begin{equation}
		\label{Eq:LaplacianCv}
		\Delta v_\tau + \dfrac{\alpha}{2} \dfrac{ 1 - |\nabla v|^2 }{v^2} v_\tau + \alpha\dfrac{1}{v}\nabla (v_\tau) \cdot  \nabla v  =
		(n-2) \dfrac{v_\tau}{\tau^2}.
	\end{equation}
	On the one hand, note that we can rewrite the left-hand side in the equation for $u_\tau$ using that $v = \beta u^{1/\beta}$ and that the gradients of $u$ and $v$ are related through \eqref{Eq:RelationGradientsuv}.
	Thus, \eqref{Eq:Equtau} becomes
	\begin{equation}
		\label{Eq:EqutauV}
		\Delta (u_\tau) =  (\beta v)^{\alpha/2} 
		\left ( \dfrac{n-2}{\tau^2} \, v_\tau + \dfrac{\alpha}{2}  (\gamma - 1)\beta \, \dfrac{v_\tau}{v^2} \right ) \quad \text{ in } \{u>0\}\cap \{\tau > 0 \} \cap \Omega.
	\end{equation}
	On the other hand, from the relation $u_\tau = (\beta v)^{\alpha/2} v_\tau$ we easily obtain
	\begin{equation}
		\Delta (u_\tau) = (\beta v)^{\alpha/2} \left( \frac{\alpha}{2} \left(\dfrac{\alpha}{2} - 1\right) \dfrac{|\nabla v|^2}{v^2} v_\tau
		+ \dfrac{\alpha}{2} \dfrac{\Delta v}{v} v_\tau 
		+ \alpha \dfrac{1}{v} \nabla(v_\tau)\cdot  \nabla v 
		+ \Delta (v_\tau)
		\right),
	\end{equation}
	also in $\{v>0\}\cap \{\tau > 0 \} \cap \Omega$.
	Putting together the last two equations, and using that $(\gamma - 1) \beta = \alpha/2 - 1$ and the equation for $v$, \eqref{Eq:ModAltPhillipsEquation}, we get
	\begin{equation}
		\begin{split}
		\Delta (v_\tau) + \alpha \dfrac{1}{v} \nabla(v_\tau)\cdot  \nabla v 
		&= \dfrac{n-2}{\tau^2} \, v_\tau  
		+ \dfrac{\alpha}{2} \left( (\gamma - 1) \beta - \left(\dfrac{\alpha}{2} - 1\right)|\nabla v|^2 \right) \dfrac{v_\tau }{v^2} - \dfrac{\alpha}{2} \dfrac{\Delta v}{v} v_\tau \\
		&=  \dfrac{n-2}{\tau^2} \, v_\tau  
		+ \dfrac{\alpha}{2} \left( \left(\dfrac{\alpha}{2} - 1\right) \dfrac{1 - |\nabla v|^2 }{v^2 } -  \dfrac{\Delta v}{v} \right)  v_\tau \\
		&=  \dfrac{n-2}{\tau^2} \, v_\tau  
		+ \dfrac{\alpha}{2} \left( \left(\dfrac{\alpha}{2} - 1\right) \dfrac{1 - |\nabla v|^2 }{v^2 } - \dfrac{\alpha}{2} \dfrac{1 - |\nabla v|^2 }{v^2 } \right)  v_\tau ,
		\end{split}
	\end{equation}
	which gives \eqref{Eq:LaplacianCv} and, as a consequence, \eqref{Eq:StabilityutauV} for every $\eta  \in C^1_c(\Omega \cap \{\tau > 0\})$.
	
	To conclude, we need to check that \eqref{Eq:StabilityutauV} holds for $\eta$ not necessarily vanishing on $\{\tau = 0\}$. 
	For this, we will use axially and radially symmetric cut-off functions build from a smooth one-dimensional function $\zeta$ which is $0$ in $(-\infty,1/2)$ and $1$ in $(1,+\infty)$.
	First, for $\varepsilon>0$, replace $\eta$ by $\eta \zeta_\varepsilon$, with $\zeta_\varepsilon(\tau)= \zeta (\tau/\varepsilon)$ and $\eta  \in C^1_c(\Omega \setminus \{0\})$. 
	Then, the stability inequality \eqref{Eq:StabilityutauV} holds for $\eta  \in C^1_c(\Omega \setminus \{0\})$ if we show that the term 
	\begin{equation}
		\int_{-\infty}^{+\infty} \int_{\varepsilon/2}^{\varepsilon} \tau^{n-2} v^\alpha v_\tau^2 \eta^2 |\nabla\zeta_\varepsilon|^2 \d \tau \d x_1
	\end{equation}
	converges to $0$ as $\varepsilon\to 0$.
	This follows, taking into account that $|\nabla\zeta_\varepsilon|^2\leq C \varepsilon^{-2}$, from the fact that $n\geq3$ and that $|v_\tau|\to 0$ as $\tau\to0$ (which is a consequence of the axial symmetry of $v_\tau$ noting that $v$ is smooth in the support of~$\eta$ up to the free boundary).
	Finally, we repeat the same argument replacing the previous $\eta$ by $\eta \zeta_\varepsilon$, where now $\zeta_\varepsilon$ is a radial cut-off function $\zeta_\varepsilon(r)= \zeta (r/\varepsilon)$ with $r=|x|$, and $\eta  \in C^1_c(\Omega)$.
	In this case, to show that \eqref{Eq:StabilityutauV} holds for $\eta  \in C^1_c(\Omega)$ we need to see that the term
	\begin{equation}
		 \int_{\varepsilon/2}^{\varepsilon} r^{n-1} v^\alpha v_\tau^2 \eta^2 |\nabla\zeta_\varepsilon|^2 \d r
	\end{equation}
	converges to $0$ as $\varepsilon\to 0$, which follows from the fact that $v^\alpha \leq C \varepsilon^\alpha$ in $B_\varepsilon$, the support of $\nabla \zeta_\varepsilon$.
\end{proof}

\begin{proof}[Proof of \Cref{Prop:Stabilityutau} using $u$]	
	In this case the proof requires a finer analysis of the behavior of certain quantities near the free boundary.
	We will assume first that $\gamma > 2/3$ and at the end of the proof (Step 3 below) we will show how to proceed in the range $\gamma \in (0,2/3]$.
	We will only establish \eqref{Eq:Stabilityutau} for $\eta  \in C^1_c(\Omega \cap \{\tau > 0\})$, since then we can use the same arguments as in the end of the previous proof to obtain the result for $\eta  \in C^1_c(\Omega)$.
	
	\underline{Step 1:} Assume $\gamma > 2/3$ (that is, $\alpha >1$). 
	In the stability condition \eqref{Eq:StabilityAP1} we take	
	\begin{equation}
		\varphi = u^{-\gamma/2} \xi,
	\end{equation}
	with $\xi$ a $C^1$ function in $\{u>0\}\cap \Omega$, with compact support in $\Omega$, such that $\xi \equiv 0$ in $\{u=0\}\cap \Omega$, and satisfying $|\xi| \asymp \dfb^{\frac{\gamma}{2 - \gamma}} = \dfb^\frac{\alpha}{2}$.
	Note that these assumptions make $\varphi$ an admissible choice, since the behavior of $\xi$ near the free boundary compensates the singularity of $u^{-\gamma/2}$.
	
	Now, since $\varphi_j = - (\gamma/2) u^{-\gamma/2-1} u_j \xi + u^{-\gamma/2} \xi_j$ for $j=1,\ldots,n$,	we have
	\begin{equation}
		|\nabla \varphi|^2 = \left(\frac{\gamma}{2}\right)^2 u^{-\gamma-2} |\nabla u|^2 \xi^2 
		- \frac{\gamma}{2} u^{-\gamma-1} \nabla u \cdot \nabla (\xi^2)  
		+ u^{-\gamma} |\nabla \xi|^2.
	\end{equation}
	Thus, the stability condition \eqref{Eq:StabilityAP1} becomes
	\begin{equation}
		\begin{split}
			\dfrac{2-\gamma}{2} \dfrac{\gamma}{2}  
			\int_{\{u>0\} \cap \Omega} \dfrac{u^\gamma - |\nabla u|^2}{u^2}\xi^2  \d x 
			&\leq 
			\left(\frac{\gamma}{2}\right)^2 \int_{\{u>0\} \cap \Omega} u^{-2} |\nabla u|^2 \xi^2 \d x
			- \frac{\gamma}{2} \int_{\{u>0\} \cap \Omega} u^{-1} \nabla u \cdot \nabla (\xi^2) \d x \\
			& \quad \quad + \int_{\{u>0\} \cap \Omega} |\nabla \xi|^2 \d x.
		\end{split}
	\end{equation}
	Integrating by parts in the second term of the right-hand side we obtain
	\begin{equation}
		\label{Eq:INTBYPARTS1}
		- \frac{\gamma}{2} \int_{\{u>0\} \cap \Omega} u^{-1} \nabla u \cdot \nabla (\xi^2) \d x = \frac{\gamma}{2} \int_{\{u>0\} \cap \Omega} \div \left( \dfrac{\nabla u}{u} \right) \xi^2 \d x.
	\end{equation}
	Note that here no boundary term appears if $\alpha > 1$ (that is, $\gamma > 2/3$), since  $u^{-1} \nabla u \asymp \dfb^{-1}$ and therefore $u^{-1} \nabla u \xi ^2 \asymp \dfb^{\frac{2\gamma}{2 - \gamma}-1} = \dfb^{\alpha - 1}$.
	Using the equation \eqref{Eq:AltPhillipsEquation}, we have
	\begin{equation}
		\dfrac{\gamma}{2} \div \left( \dfrac{\nabla u}{u} \right) 
		= \dfrac{\gamma}{2} \dfrac{u \Delta u - |\nabla u|^2}{u^2}
		= \left( \dfrac{\gamma}{2} \right)^2 u^{\gamma - 2} -\dfrac{\gamma}{2} \dfrac{|\nabla u|^2}{u^2} \quad \text{ in } \{u>0\}\cap \Omega
	\end{equation}
	and thus the stability condition becomes
	\begin{equation}
		\begin{split}
			\dfrac{2-\gamma}{2} \dfrac{\gamma}{2}  
			\int_{\{u>0\} \cap \Omega} & u^{\gamma - 2} \xi^2  \d x 
			- \dfrac{2-\gamma}{2} \dfrac{\gamma}{2}  
			\int_{\{u>0\} \cap \Omega} \dfrac{|\nabla u|^2}{u^2}\xi^2  \d x \\
			& \leq 
			\left(\frac{\gamma}{2}\right)^2 \int_{\{u>0\} \cap \Omega} \dfrac{|\nabla u|^2}{u^2} \xi^2 \d x
			+ \left( \dfrac{\gamma}{2} \right)^2 
			\int_{\{u>0\} \cap \Omega} u^{\gamma - 2} \xi^2 \d x \\ & \quad \quad -\dfrac{\gamma}{2} \int_{\{u>0\} \cap \Omega}  \dfrac{|\nabla u|^2}{u^2} \xi^2 \d x 
			+ \int_{\{u>0\} \cap \Omega} |\nabla \xi|^2 \d x.
		\end{split}
	\end{equation}
	Rearranging terms we have
	\begin{equation}
		\label{Eq:StabilityAP2}
		\dfrac{\gamma (1 - \gamma)}{2}   
		\int_{\{u>0\} \cap \Omega}  u^{\gamma - 2} \xi^2  \d x  \leq  \int_{\{u>0\} \cap \Omega} |\nabla \xi|^2 \d x.
	\end{equation}
	Note that since we chose $\xi$ such that $|\xi|^2 \asymp \dfb^{\frac{2\gamma}{2 - \gamma}} = \dfb^\alpha$, the left-hand side is integrable if and only if $\alpha > 1$ (i.e., $\gamma > 2/3$).

	\underline{Step 2:}  In the previous inequality \eqref{Eq:StabilityAP2} we take a test function of the form
	\begin{equation}
		\xi = c \eta,
	\end{equation}
	where $c = u_\tau$ and $\eta$ is a $C^1$ function with compact support in $\Omega \cap \{\tau > 0\}$ not necessarily vanishing on the free boundary.
	Note that $\xi$ satisfies the requirements of behavior near the free boundary since $|u_\tau| \asymp \dfb^\frac{\alpha}{2}$.

	With this choice of $\xi$ we have
	\begin{equation}
		 \int_{\{u>0\} \cap \Omega} |\nabla \xi|^2 \d x 
		=   \int_{\{u>0\} \cap \Omega} c^2 |\nabla \eta|^2 \d x
		+  \int_{\{u>0\} \cap \Omega} c \nabla c \cdot \nabla (\eta^2) \d x
		+  \int_{\{u>0\} \cap \Omega} |\nabla c|^2 \eta^2 \d x.
	\end{equation}
	Integrating the second term by parts we get 
	\begin{equation}
		\label{Eq:INTBYPARTS2}
		\int_{\{u>0\} \cap \Omega} c \nabla c \cdot \nabla (\eta^2) \d x = - \int_{\{u>0\} \cap \Omega} |\nabla c|^2 \eta^2 \d x - \int_{\{u>0\} \cap \Omega} c \Delta c \eta^2 \d x.
	\end{equation}
	Note that there are no boundary terms here if $c = u_\tau$, since $|c \nabla c| \leq \dfb^{\alpha - 1}$ and $\alpha > 1$ (i.e., $\gamma > 2/3$).
	As a consequence, we have 
	\begin{equation}
		\int_{\{u>0\} \cap \Omega} |\nabla \xi|^2 \d x 
		=   \int_{\{u>0\} \cap \Omega} c^2 |\nabla \eta|^2 \d x
		- \int_{\{u>0\} \cap \Omega} c \Delta c \eta^2 \d x.
	\end{equation}
	Using the equation for $c=u_\tau$, \eqref{Eq:Equtau}, we get
	\begin{equation}
		- \int_{\{u>0\} \cap \Omega} c \Delta c \eta^2 \d x 
		= 
		- (n-2) \int_{\{u>0\} \cap \Omega} \dfrac{u_\tau^2}{\tau^2} \eta^2 \d x
		- \dfrac{\gamma (\gamma - 1)}{2} \int_{\{u>0\} \cap \Omega} u^{\gamma - 2} u_\tau^2 \eta^2 \d x,
	\end{equation}
	and putting all together we finally obtain  \eqref{Eq:Stabilityutau} for $\gamma > 2/3$.

	\underline{Step 3:} We show how to proceed if $\gamma \leq 2/3$. 
	The main concern in this range is that, in the computations done before, there are two points in which, when integrating by parts, a boundary term would appear, and each of these terms would be infinite if $\gamma < 2/3$. 	
	The rough explanation of why this will not be eventually an issue is that the two terms are infinities of the same order that cancel out, leading to \eqref{Eq:Stabilityutau}.
	
	To formalize these thoughts, we follow the computations of the previous steps but computing all the integrals in $\{u>\delta\}$ for some $\delta > 0$.
	If we do it in this way, we should check which are the boundary terms that appear in the integration by parts.
	First, in \eqref{Eq:INTBYPARTS1} we would get the boundary term
	\begin{equation}
		\label{Eq:FBTERM1}
		-\frac{\gamma}{2} \int_{\partial \{u>\delta\} \cap \Omega} \left( u^{-1} \nabla u \xi^2 \right ) \cdot \nu\d \sigma = -\frac{\gamma}{2} \int_{\partial \{u>\delta\} \cap \Omega} \left( u^{-1} \nabla u  u_\tau^2 \eta^2 \right ) \cdot \nu\d \sigma 
	\end{equation}
	in the right-hand side of the stability inequality.
	Second, in \eqref{Eq:INTBYPARTS2} we would get
	\begin{equation}
		\int_{\partial \{u>\delta \} \cap \Omega} \left (c \nabla c  \eta^2 \right) \cdot \nu  \d \sigma 
		=\int_{\partial \{u>\delta \} \cap \Omega} \left (u_\tau \nabla u_\tau  \eta^2 \right) \cdot \nu  \d \sigma  
	\end{equation}
	also in the right-hand side.
	
	Summarizing, we would obtain
	\begin{equation}
		\label{Eq:StabilityutauDELTA}
		(n-2) \int_{\{u>\delta\} \cap \Omega}  \dfrac{u_\tau^2}{\tau^2} \eta^2 \d x 
		\leq \int_{\{u>\delta\} \cap \Omega} u_\tau^2  |\nabla \eta |^2  \d x
		+ \int_{\partial \{u>\delta \} \cap \Omega} \eta^2 u_\tau  \left( \nabla u_\tau   -\frac{\gamma}{2} u^{-1} \nabla u  u_\tau   \right) \cdot \nu  \d \sigma .
	\end{equation}
	To get \eqref{Eq:Stabilityutau} we want to let $\delta \to 0$ in the previous inequality and use dominated convergence, and thus we have to prove that the boundary term goes to zero as $\delta \to 0$.
	We realize that 
	\begin{equation}
		\begin{split}
			u_\tau  \left( \nabla u_\tau   -\frac{\gamma}{2} u^{-1} \nabla u  u_\tau   \right) 
			&= u_\tau u^{-\gamma/2} \left( \nabla u_\tau u^{\gamma/2} -u_\tau   \frac{\gamma}{2} u^{\gamma/2 - 1} \nabla u    \right) \\
			&= u_\tau u^{-\gamma/2} \left( \nabla u_\tau u^{\gamma/2} -u_\tau  \nabla (u^{\gamma/2})    \right) \\
			&= u^\gamma \dfrac{u_\tau}{u^{\gamma/2}} \nabla \left(\dfrac{u_\tau}{u^{\gamma/2}}\right)\\
			&= u^\gamma v_\tau \nabla (v_\tau),
		\end{split}
	\end{equation}
	where in this last equality we have used the relation between the gradients of $u$ and $v$ given by~ \eqref{Eq:RelationGradientsuv}.
	Since $v$ is $C^\infty$ up to the free boundary in the support of $\eta$, it follows that $|u^\gamma v_\tau \nabla (v_\tau)| \leq C \delta^\alpha$ in $\partial \{u>\delta \} \cap \Omega$ for some constant $C$ depending on $\eta$ but independent of~$\delta$.
	From this the result follows letting $\delta \to 0$ (choosing a sequence $\delta_k\to$ for which $|\partial \{u>\delta_k \} \cap \Omega|\leq C$, which exists by Sard's theorem).
\end{proof}

With \Cref{Prop:Stabilityutau} proved we can now establish \Cref{Th:AxiallySymm}.

\begin{proof}[Proof of \Cref{Th:AxiallySymm}]
	First, note that by a simple approximation argument, we can take the test function~$\eta$ in \eqref{Eq:Stabilityutau} being just Lipschitz and not $C^1$.
	Taking this into account, let $R \geq 1$, $\varepsilon\in (0,1)$, and $\theta > 0$ a constant to be chosen later, and for such parameters we take in \eqref{Eq:Stabilityutau}, with $\Omega = \R^n$, the Lipschitz function
	\begin{equation}
		\eta = \begin{cases}
			\tau^{-\theta/2} \zeta_R & \text{ for } \tau > \varepsilon,\\
			\varepsilon^{-\theta/2} \zeta_R & \text{ for } \tau \leq \varepsilon.
		\end{cases}
	\end{equation}
	Here $\zeta_R$ is a cut-off function such that $\zeta_R \equiv 1$ in $B_R$ and $\zeta_R \equiv 0$ in $\R^n \setminus B_{2R}$.
	
	Our goal will be to estimate some integrals, and through the proof we will denote by $C$ a positive constant depending on $n$, $\gamma$, $\theta$, and $u$ (but independent of $\varepsilon$ and $R$) which may change each time it appears.
	In particular, we will use that $|\nabla \zeta_R|\leq C/R$ in $B_{2R} \setminus B_R$ and that, by $C^{1, \beta-1}$ global regularity (see \Cref{Sec:PreliminaryResults}), we have
	\begin{equation}
		\label{Eq:BoundutauR}
		u_\tau^2 \leq C R^{2\beta - 2} = C R^\alpha \quad \text{ in } B_{2R} \setminus B_R.
	\end{equation}

	First, we analyze the term integrated on $\{\tau \leq \varepsilon \}$ in the right-hand side of \eqref{Eq:Stabilityutau}.
	We have
	\begin{equation}
		\begin{split}
			\int_{\{u>0\}  \cap \{\tau \leq \varepsilon \} \cap B_{2R}} u_\tau^2  |\nabla \eta |^2  \d x 
			&=  \varepsilon^{-\theta} 	\int_{\{u>0\}  \cap \{\tau \leq \varepsilon \} \cap (B_{2R} \setminus B_R) } u_\tau^2  |\nabla \zeta_R |^2  \d x  \\
			& \leq C R^{\alpha - 1} \varepsilon^{-\theta}  \int_0^\varepsilon \tau^{n - 2} \d \tau \\
			&\leq C R^{\alpha - 1} \varepsilon^{n - 1 - \theta}.
		\end{split}
	\end{equation}
	Thus, from \eqref{Eq:Stabilityutau} we get
	\begin{equation}
		(n-2) \int_{\{u>0\}\cap \{\tau > \varepsilon \}\cap B_{2R}} \tau^{-\theta - 2}  u_\tau^2  \zeta_R^2 \d x 
		\leq \int_{\{u>\varepsilon\} \cap \{\tau > \varepsilon \} \cap B_{2R}} u_\tau^2  |\nabla (\tau^{-\theta/2} \zeta_R) |^2  \d x 
		+ C R^{\alpha - 1} \varepsilon^{n - 1 - \theta}.
	\end{equation}
	Now, for every $\delta >0$ (which will be chosen later depending only on $n$ and $\theta$) we have that
	\begin{equation}
		|\nabla (\tau^{-\theta/2} \zeta_R) |^2 \leq (1 + \delta) \dfrac{\theta^2}{4} \tau^{-\theta - 2}  \zeta_R^2  + \left(1 + \dfrac{1}{\delta}\right)\tau^{-\theta }  |\nabla \zeta_R|^2 \chi_{B_{2R} \setminus B_{R}},
	\end{equation}
	and thus we obtain
	\begin{equation}
		\label{Eq:ProofAxiallySymmEst}
		\begin{split}
			\left(n - 2 - (1 + \delta)\dfrac{\theta^2}{4} \right) \int_{\{u>0\} \cap \{\tau > \varepsilon \} \cap B_{2R}} \tau^{-\theta - 2}  u_\tau^2  \zeta_R^2 \d x 
			&\leq  
			C\int_{\{u>0\} \cap \{\tau > \varepsilon \} \cap (B_{2R} \setminus B_R)}  u_\tau^2 \tau^{-\theta }  |\nabla \zeta_R|^2 \d x \\
			& \quad \quad + C R^{\alpha - 1} \varepsilon^{n - 1 - \theta}.
		\end{split}
	\end{equation}
	Last, we estimate the first term in the right-hand side as follows:
	\begin{equation}
		\int_{\{u>0\} \cap \{\tau > \varepsilon \} \cap (B_{2R} \setminus B_R)}  u_\tau^2 \tau^{-\theta }  |\nabla \zeta_R|^2 \d x 
		\leq  C R^{\alpha -1} \int_{\varepsilon}^{R} \tau^{n - 2 - \theta} \d \tau
		\leq  C R^{\alpha -1} \left(R^{n - 1 - \theta} - \varepsilon^{n - 1 - \theta}\right),
	\end{equation}
	and choosing a bigger constant $C$ in the second right-hand side term of \eqref{Eq:ProofAxiallySymmEst} if needed, we get
	\begin{equation}
		\left(n - 2 - (1 + \delta)\dfrac{\theta^2}{4} \right) \int_{\{u>0\} \cap \{\tau > \varepsilon \} \cap B_{2R}} \tau^{-\theta - 2}  u_\tau^2  \zeta_R^2 \d x  
		\leq C R^{n + \alpha -  2 - \theta} + C R^{\alpha - 1} \varepsilon^{n - 1 - \theta}.
	\end{equation}
	
	From this last estimate we want to deduce that $u$ is one-dimensional.
	To do it, note that if we take $\theta$ such that
	\begin{equation}
		\label{Eq:ConditionTheta1}
		n - 2 > \dfrac{\theta^2}{4},
	\end{equation}
	after choosing $\delta$ small enough (depending on $n$ and $\theta$ so that $n - 2 > (1 + \delta) \theta^2/4$) we have
	\begin{equation}
		\int_{\{u>0\}  \cap \{\tau > \varepsilon \}  \cap B_{R}} \tau^{-\theta - 2}  u_\tau^2  \d x  
		\leq C R^{n + \alpha -  2 - \theta} + C R^{\alpha - 1} \varepsilon^{n - 1 - \theta}.
	\end{equation}
	Then, taking into account that \eqref{Eq:ConditionTheta1} yields $n - 1 - \theta >0$ (since $n-1 = n-2 + 1 > \theta^2/4 + 1 \geq \theta$), by letting $\varepsilon\to 0$ we obtain
	\begin{equation}
		\int_{\{u>0\} \cap B_{R}} \tau^{-\theta - 2}  u_\tau^2  \d x  
		\leq C R^{n + \alpha -  2 - \theta},
	\end{equation}
	and thus, if we take $\theta$ such that
	\begin{equation}
		\label{Eq:ConditionTheta2}
		n + \alpha -  2 < \theta,
	\end{equation}
	by letting $R\to +\infty$ we deduce that $u_\tau \equiv 0$ in $\R^n$, obtaining that $u$ is one-dimensional (only depending on $x_n$).
	
	Summarizing, if we can choose $\theta>0$ satisfying \eqref{Eq:ConditionTheta1} and \eqref{Eq:ConditionTheta2}, i.e. $n + \alpha -  2 <\theta<2\sqrt{n-2}$,  we can show that $u$ is one-dimensional.
	This can be done in dimensions $n\geq 3$ satisfying
	\begin{equation}
		\label{Eq:ConditionDimensionTheta}
		2\sqrt{n - 2} > n - 2 + \alpha.
	\end{equation}
	Setting $\lambda := \sqrt{n - 2}$, this is equivalent to
	\begin{equation}
		\lambda^2 - 2 \lambda + \alpha < 0,
	\end{equation}
	which will have solutions if the equation $\lambda^2 - 2 \lambda + \alpha = 0$ has two different real roots.
	Denoting them by $\lambda_\pm$ we have
	\begin{equation}
		\lambda_\pm = \dfrac{2 \pm \sqrt{ 4 - 4 \alpha}}{2} = 1 \pm \sqrt{1 - \alpha},
	\end{equation}
	and thus we require $\alpha < 1$.
	For such values of $\alpha$ (that is, for $\gamma<2/3$), the admissible dimensions for which \eqref{Eq:ConditionDimensionTheta} holds are given by
	\begin{equation}
		1 - \sqrt{1 - \alpha} < \sqrt{n-2} < 1 + \sqrt{1 - \alpha},
	\end{equation}
	that is, \eqref{Eq:DimensionConstrain}.
\end{proof}

\begin{remark}
	\label{Remark:ThAxiallySymWithv}
	Note that the last proof can be carried out from the inequality \eqref{Eq:StabilityutauV}, using $v$ instead of $u$.
	Indeed, the proof would follow exactly the same lines with the same test function, but instead of using \eqref{Eq:BoundutauR} to estimate the growth of $u_\tau$, we would use that $v$ is globally Lipschitz and thus $|v_\tau|\leq C$ and $v^\alpha \leq C R^\alpha$.
\end{remark}

\appendix

\section{}
\label{Appendix:ExpansionDet}

\begin{lemma}
	\label{Lemma:ExpansionDet}
	Let $A$ be a square matrix.
	Then, we have the following expansion:
	\begin{equation}
		\label{Eq:DetExpansionOrder2}
		\det(Id+\varepsilon A) = 1 + \varepsilon \Tr A + \varepsilon^2 \dfrac{(\Tr A)^2 -  \Tr (A^2)}{2} + O(\varepsilon^3)
	\end{equation}
\end{lemma}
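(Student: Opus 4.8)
The plan is to expand $\det(Id+\varepsilon A)$ directly as a polynomial in $\varepsilon$ by exploiting the multilinearity of the determinant in the columns, and then to read off the coefficients of $\varepsilon^0$, $\varepsilon$, and $\varepsilon^2$, absorbing everything of higher order into $O(\varepsilon^3)$.

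Concretely, write $A=(a_{ij})$, let $a_j$ denote its $j$-th column and $e_j$ the $j$-th standard basis vector, so that the $j$-th column of $Id+\varepsilon A$ is $e_j+\varepsilon a_j$. Expanding the determinant by multilinearity in the columns, I would obtain $\det(Id+\varepsilon A)=\sum_{S\subseteq\{1,\dots,n\}}\varepsilon^{|S|}\,D_S$, where $D_S$ is the determinant of the matrix whose $j$-th column equals $a_j$ if $j\in S$ and $e_j$ if $j\notin S$. Expanding this determinant successively along each column $j\notin S$ (which is just $e_j$) reduces $D_S$ to the determinant of the principal submatrix $(a_{ij})_{i,j\in S}$ of $A$. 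Hence $D_{\emptyset}=1$, $D_{\{i\}}=a_{ii}$, and $D_{\{i,j\}}=a_{ii}a_{jj}-a_{ij}a_{ji}$ for $i<j$, so that the coefficients of $\varepsilon^0$, $\varepsilon$, and $\varepsilon^2$ are $1$, $\sum_i a_{ii}=\Tr A$, and $\sum_{i<j}(a_{ii}a_{jj}-a_{ij}a_{ji})$ respectively, while the (finitely many) terms with $|S|\geq 3$ contribute only to $O(\varepsilon^3)$.

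It then remains to identify $\sum_{i<j}(a_{ii}a_{jj}-a_{ij}a_{ji})$ with $\tfrac12\big((\Tr A)^2-\Tr(A^2)\big)$. This I would do by symmetrization: since $(\Tr A)^2=\sum_{i,j}a_{ii}a_{jj}$ and $\Tr(A^2)=\sum_{i,j}a_{ij}a_{ji}$, their difference equals $\sum_{i,j}(a_{ii}a_{jj}-a_{ij}a_{ji})$, whose diagonal contributions $i=j$ cancel and whose off-diagonal contributions are symmetric under $i\leftrightarrow j$, hence add up to twice the sum over $i<j$. There is no genuine obstacle in this argument; the only point requiring a little care is to keep the multilinear expansion running over subsets (not tuples) of column indices, which is precisely what makes each $D_S$ a principal minor and keeps the bookkeeping of the $\varepsilon^2$-coefficient clean.
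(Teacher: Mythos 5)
Your proof is correct, but it takes a genuinely different route from the paper. You compute the $\varepsilon^2$ coefficient directly: expanding by multilinearity over subsets $S$ of column indices, reducing each $D_S$ to a principal minor, and then identifying $\sum_{i<j}(a_{ii}a_{jj}-a_{ij}a_{ji})$ with $\tfrac12\big((\Tr A)^2-\Tr(A^2)\big)$ by the symmetrization $\sum_{i,j}(a_{ii}a_{jj}-a_{ij}a_{ji})=(\Tr A)^2-\Tr(A^2)$, whose diagonal terms vanish. The paper instead uses multilinearity only to establish the a priori form $\det(Id+\varepsilon A)=1+\varepsilon\Tr A+\varepsilon^2 f(A)+O(\varepsilon^3)$ with $f$ unknown, and then pins down $f$ by comparing this expansion against the identity $\det[\exp(\varepsilon A)]=\exp[\Tr(\varepsilon A)]$, substituting $A+\varepsilon A^2/2+O(\varepsilon^2)$ into the a priori form and matching second-order coefficients. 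Your argument is more elementary and self-contained (it avoids invoking the determinant-of-exponential identity, which is itself a nontrivial fact) and in fact yields the full characteristic-polynomial expansion in terms of principal minors; the paper's argument trades that combinatorial bookkeeping for a slicker algebraic identification, at the cost of relying on an external identity. Both are complete and valid.
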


\begin{proof}
	First, we claim that
	\begin{equation}
		\label{Eq:DetExpansionOrder1}
		\det(Id+\varepsilon A) = 1 + \varepsilon \Tr A + \varepsilon^2 f(A) + O(\varepsilon^3)
	\end{equation}
	for some function $f$.	
	To see this, it suffices to denote by $A_j$ the column vectors of the matrix $A$ and write
	$$
	\det (Id + \varepsilon A) = \det (e_1 + \varepsilon A_1, e_2 + \varepsilon A_2, \ldots, e_n + \varepsilon A_n),
	$$
	where $n$ is the dimension of the matrix and $e_j$ are the canonical basis vectors of $\R^n$.
	Then, since the determinant is multilinear on the columns, we readily see that
	\begin{equation}
		\begin{split}
			\det (Id + \varepsilon A) 
			&= \det (e_1 + \varepsilon A_1, e_2 + \varepsilon A_2, \ldots, e_n + \varepsilon A_n) \\
			&= \det (e_1, e_2, \ldots, e_n) + \varepsilon \big[ \det (A_1, e_2, \ldots, e_n) + \ldots + \det(e_1, \ldots, e_{n-1}, A_n) \big] \\
			& \quad \quad  + \varepsilon^2 f(A) +O(\varepsilon^3) ,
		\end{split}
	\end{equation}
	proving \eqref{Eq:DetExpansionOrder1}.
	
	It remains to find the explicit expression of $f$.
	First, by a standard property of the exponential of a matrix we have $\det[\exp (\varepsilon A)] = \exp [\Tr (\varepsilon A)]$, and thus the expansion of the exponential function gives
	$$
	\det ( Id +  \varepsilon A + \varepsilon^2 A^2 /2 + O(\varepsilon^3)) = 1 + \varepsilon \Tr A + \varepsilon^2 (\Tr A)^2 / 2 + O(\varepsilon^3).
	$$
	Now, taking the left hand-side of the last expression and factoring one $\varepsilon$, using \eqref{Eq:DetExpansionOrder1} we obtain
	\begin{equation}
		\begin{split}
			\det ( Id +  \varepsilon A + \varepsilon^2 A^2 /2 + O(\varepsilon^3))
			&= \det ( Id +  \varepsilon [A + \varepsilon A^2 /2 + O(\varepsilon^2)]) \\
			&= 1 + \varepsilon \Tr (A + \varepsilon A^2 /2 + O(\varepsilon^2)) \\
			& \quad \quad + \varepsilon^2 f(A + \varepsilon A^2 /2 + O(\varepsilon^2)) + O(\varepsilon^3) \\
			&= 1 + \varepsilon \Tr A + \varepsilon^2 \Tr (A^2)/2 + \varepsilon^2 f(A) + O(\varepsilon^3).
		\end{split}
	\end{equation}
	Comparing the second order terms of the two previous expressions we get
	$$
	\Tr (A^2)/2 + f(A) = (\Tr A)^2 / 2,
	$$
	concluding the proof.
\end{proof}

\begin{lemma}
	\label{Lemma:ExpansionNormSqMatrixVector}
	Let $A$ and $B$ be two square matrices and, for $\varepsilon>0$, let $M_\varepsilon$ be a square matrix defined by the expansion
	$$
	M_\varepsilon = Id + \varepsilon A + \varepsilon^2 B + O(\varepsilon^3).
	$$
	
	Then, for any vector $q$ we have
	\begin{equation}
		\label{Eq:ExpansionNormSqMatrixVector}
		|M_\varepsilon^\intercal q|^2 = |q|^2 +  \varepsilon 2 q \cdot A q + \varepsilon^2 (|A^\intercal q|^2 + 2 q \cdot  B q) + O(\varepsilon^3),
	\end{equation}
	where $A^\intercal$ denotes the transpose of the matrix $A$.
\end{lemma}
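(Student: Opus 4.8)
The plan is to reduce everything to the elementary identity $q\cdot (S^\intercal q) = q\cdot (Sq)$, valid for any square matrix $S$ and any vector $q$, since $q^\intercal S^\intercal q = (Sq)^\intercal q = q^\intercal (Sq)$. Everything else is bookkeeping of powers of $\varepsilon$.

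First I would take the transpose of the given expansion of $M_\varepsilon$: since transposition is linear and continuous, $M_\varepsilon^\intercal = Id + \varepsilon A^\intercal + \varepsilon^2 B^\intercal + O(\varepsilon^3)$, the remainder term still being $O(\varepsilon^3)$ (entrywise). Applying this to $q$ gives $M_\varepsilon^\intercal q = q + \varepsilon A^\intercal q + \varepsilon^2 B^\intercal q + O(\varepsilon^3)$.

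Next I would compute $|M_\varepsilon^\intercal q|^2 = (M_\varepsilon^\intercal q)\cdot(M_\varepsilon^\intercal q)$ by expanding the product and collecting terms by order in $\varepsilon$. The order-zero term is $q\cdot q = |q|^2$. The order-one term is $2\, q\cdot (A^\intercal q)$, which by the identity above equals $2\, q\cdot (A q)$. The order-two term has two contributions: the square $|A^\intercal q|^2$ of the first-order part, and the cross term $2\, q\cdot (B^\intercal q) = 2\, q\cdot (B q)$. All further contributions are absorbed into $O(\varepsilon^3)$ (the lowest such term being $2\varepsilon^3 (A^\intercal q)\cdot(B^\intercal q)$ together with the cross terms involving the $O(\varepsilon^3)$ remainder, which are controlled since $q$ is fixed). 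Collecting these gives exactly \eqref{Eq:ExpansionNormSqMatrixVector}.

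There is no real obstacle here: the only point requiring a word of care is that the $O(\varepsilon^3)$ remainder in the hypothesis is a matrix-valued quantity, so one should note that its contribution to $|M_\varepsilon^\intercal q|^2$ is indeed $O(\varepsilon^3)$ because $q$ is held fixed (so the implicit constant depends on $q$, which is harmless for the intended application where this lemma is used pointwise with $q = \nabla u(T_\varepsilon^{-1}(y))$).
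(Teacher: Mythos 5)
Your proof is correct and is essentially the same direct computation as the paper's: the paper expands $M_\varepsilon M_\varepsilon^\intercal$ first and then sandwiches with $q$, while you apply $M_\varepsilon^\intercal$ to $q$ first and then take the squared norm, both relying on the identity $q\cdot S^\intercal q = q\cdot Sq$ to symmetrize the first- and second-order terms. Your remark that the $O(\varepsilon^3)$ remainder's contribution is controlled because $q$ is fixed is a reasonable (and harmless) extra precision.
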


\begin{proof}
	It is a simple computation, which we show here considering vectors as $n\times 1$ matrices and using matrix notation for the dot product (i.e., $q \cdot q = q^\intercal q$).
	We have $|M_\varepsilon^\intercal q|^2 = (M_\varepsilon^\intercal q)^\intercal M_\varepsilon^\intercal q = q^\intercal M_\varepsilon M_\varepsilon^\intercal q$ 	and using that
	\begin{equation}
		\begin{split}
			M_\varepsilon M_\varepsilon^\intercal 
			&= (Id + \varepsilon A + \varepsilon^2 B)(Id + \varepsilon A^\intercal + \varepsilon^2 B^\intercal) + O(\varepsilon^3) \\
			&= Id + \varepsilon( A + A^\intercal)  + \varepsilon^2 (B + B^\intercal + A A^\intercal) + O(\varepsilon^3),
		\end{split}	
	\end{equation} 
	we obtain
	\begin{equation}
		|M_\varepsilon^\intercal q|^2 = q^\intercal M_\varepsilon M_\varepsilon^\intercal q = |q|^2 +  \varepsilon 2 q^\intercal A q + \varepsilon^2 (|A^\intercal q|^2 + 2 q^\intercal B q) + O(\varepsilon^3).
	\end{equation}
\end{proof}


\begin{bibdiv}
\begin{biblist}

\bib{AltPhillips}{article}{
   author={Alt, Hans Wilhelm},
   author={Phillips, Daniel},
   title={A free boundary problem for semilinear elliptic equations},
   journal={J. Reine Angew. Math.},
   volume={368},
   date={1986},
   pages={63--107},
   issn={0075-4102},
   review={\MR{850615}},
}

\bib{BandleWagner}{article}{
	author={Bandle, Catherine}, 
	author={Wagner, Alfred}, 
	title={Second Domain Variation for Problems with Robin Boundary Conditions},  
	journal={J. Optim. Theory Appl.}, 
	volume={167}, 
	pages={430-463},
	year={2015}, 
	review={\MR{3412445}}
}

\bib{Bonorino}{article}{
   author={Bonorino, Leonardo Prange},
   title={Regularity of the free boundary for some elliptic and parabolic problems. II},
   journal={Comm. Partial Differential Equations},
   volume={26},
   date={2001},
   number={3-4},
   pages={355--380},
   issn={0360-5302},
   review={\MR{1842037}},
}

\bib{CaffarelliJerisonKenig}{article}{
   author={Caffarelli, Luis A.},
   author={Jerison, David},
   author={Kenig, Carlos E.},
   title={Global energy minimizers for free boundary problems and full
   regularity in three dimensions},
   conference={
      title={Noncompact problems at the intersection of geometry, analysis,
      and topology},
   },
   book={
      series={Contemp. Math.},
      volume={350},
      publisher={Amer. Math. Soc., Providence, RI},
   },
   date={2004},
   pages={83--97},
   review={\MR{2082392}},
}

\bib{DeSilvaJerison}{article}{
	title={A singular energy minimizing free boundary},
	author={De Silva, Daniela},
	author={Jerison, David},
	journal={J. Reine Angew. Math.},
	volume={2009},
	number={635},
	year={2009},
	review={\MR{2572253}}
}

\bib{FernandezRealRosOton2019global}{article}{
	title={On global solutions to semilinear elliptic equations related to the one-phase free boundary problem},
	author={Fern{\'a}ndez-Real, Xavier},
	author={Ros-Oton, Xavier},
	journal={Discrete Contin. Dyn. Syst. A},
	volume={39},
	number={12},
	pages={6945--6959},
	year={2019},
	review={\MR{4026175}}
}

\bib{GarabedianSchiffer}{article}{
	author={Garabedian, Paul R.}, 
	author={Schiffer, Menahem}, 
	title={Convexity of domain functionals}, 
	journal={J. Anal. Math.},
	volume={2}, 
	pages={281-368}, 
	year={1952}, 
	review={\MR{0060117}}
}

\bib{GurtinMacCamy}{article}{
	title={On the diffusion of biological populations},
	author={Gurtin, Morton E.},
	author={MacCamy, Richard C.},
	journal={Math. Biosci.},
	year={1977},
	volume={33},
	pages={35-49},
	review={\MR{0682594}}
}

\bib{JerisonSavin}{article}{
	title={Some remarks on stability of cones for the one-phase free boundary problem},
	author={Jerison, David},
	author={Savin, Ovidiu},
	journal={Geom. Funct. Anal.},
	volume={25},
	number={4},
	pages={1240--1257},
	year={2015},
	publisher={Springer},
	review={\MR{3385632}}
}

\bib{PhillipsCPDE}{article}{
	title = {Hausdoff measure estimates of a free boundary for a minimum problem},
	author = {Phillips, Daniel},
	journal = {Comm. Partial Differential Equations},
	volume = {8},
	number = {13},
	pages = {1409-1454},
	year = {1983},
	publisher = {Taylor & Francis},
	review={\MR{0714047}}
}

\bib{RestrepoRosOton}{article}{
	title={$C^\infty$ regularity in semilinear free boundary problems},
	author={Restrepo, Daniel},
	author={Ros-Oton, Xavier},
	note={ArXiv: \href{https://arxiv.org/abs/2407.20426}{2407.20426}}
	year={2024},
}

\bib{Simons}{article}{
	title={Minimal varieties in Riemannian manifolds},
	author={Simons, James},
	journal={Ann. Math.},
	pages={62--105},
	year={1968},
	publisher={JSTOR},
	review={\MR{0233295}}
}

\end{biblist}
\end{bibdiv}

\end{document}